\newtheorem{thm}{Theorem}[section]
\newtheorem{prop}{Proposition}[section]
\newtheorem{exam}{Example}[section]
\newtheorem{lem}{Lemma}[section]
\newtheorem{cor}{Corollary}[section]
\newtheorem{defn}{Definition}[section]
\numberwithin{equation}{section}
\begin{document}

	\setcounter{page}{1}
	\title[]{Normal Categories of Normed Algebra of Finite Rank Bounded Operators}
	\begin{abstract}
		In this article, we introduce the normal category $\mathscr L(\mathcal S)$ [$\mathscr R(\mathcal S)$] of principal left [right] ideals of the normed algebra $\mathcal S$ of all finite rank bounded operators on a Hilbert space $H$ and is shown that they are isomorphic,  using Hilbert space duality. We also described the semigroup of all normal cones in $\mathscr L(\mathcal S)$ which is isomorphic to the semigroup of all finite rank operators on $H$. Further, we construct bounded normal cones in $\mathscr L(\mathcal S)$ such that the set of all bounded normal cones in $\mathscr L(\mathcal S)$ is a normed algebra isomorphic to the normed algebra $\mathcal S$.
		
	\end{abstract}
	\author[P. G. Romeo, A. Anju ]{P. G. Romeo$^{1}$,     A. Anju$^{2}$}
	
	\address{$^{1}$ Department of Mathematics, Cochin University of Science And Technology, Kerala, India.}
	\noindent \email{romeo\_ parackal@yahoo.com
	}
	
	\address{$^{2}$ Department of Mathematics, Cochin University of Science And Technology, Kerala, India.}
	\noindent \email{anjuantony842@gmail.com
	}

	\subjclass[2020]{20M17, 20M50, 47B02. }
	
	\keywords{ Regular semigroup, Semigroup of finite rank bounded operators on a Hilbert space, Normal category, Normal cone.}
	
	\date{ \newline \indent $^{2}$ Corresponding author}
	
	\baselineskip=21pt
	\vspace*{-1 cm}
	\maketitle
	
	\section{Introduction}
	\noindent In 1974, P. A. Grillet and K. S. S. Nambooripad \cite{g} introduced the cross-connections of regular partially ordered sets to study the structure of fundamental regular semigroups. Subsequently, K. S. S. Nambooripad \cite{s} 	extended this cross-connection to describe the structure of arbitrary regular semigroups using normal categories rather than regular partially ordered sets. 
	
	In this paper, we consider the regular semigroup $\mathcal S$ of all finite rank bounded operators on a Hilbert space $H$ as a normed algebra and construct bounded normal cones in the category of principal left ideals of 
	$\mathcal S$ in such a way that the cones forms a normed algebra isomorphic to the normed algebra $\mathcal S$. For this, consider the normal category $\mathscr L(\mathcal S) [\mathscr R(\mathcal S)]$ of principal left [right] ideals of the normed algebra $\mathcal S$ whose morphisms  are bounded linear maps and each hom-set of $\mathscr L(\mathcal S) [\mathscr R(\mathcal S)]$ is a normed space. 
	Note that, the normal category $\mathscr L(\mathcal S)$ is isomorphic to the normal category $\mathscr F(H)$ of finite dimensional subspaces of $H$ with linear maps as morphisms, and this isomorphism preserves the normed space structure of hom-sets.
	Similarly, the normal category $\mathscr R(\mathcal S)$ is isomorphic to the normal category $\mathscr F(H')$ of finite dimensional subspaces of the dual of $H$ with linear maps as morphisms. 
	Further, using Hilbert space duality it is shown that the normal categories $\mathscr L(\mathcal S)$ and $\mathscr R(\mathcal S)$ are isomorphic, and restriction of this isomorphism to hom-sets is a surjective conjugate-linear isometry. 
	Finally, it is shown that the regular semigroup of all normal cones in 
	$\mathscr L(\mathcal S)$ is isomorphic to the semigroup of all finite rank operators on 
	$H$ and we 
	described the bounded normal cone in $\mathscr L(\mathcal S)$ such that the set of all bounded normal cones in $\mathscr L(\mathcal S)$ produces a normed algebra isomorphic to the normed algebra $\mathcal S$. Similarly, the bounded normal cones in $\mathscr R(\mathcal S)$ is also a normed algebra which is conjugate-linear isomorphic to the normed algebra $\mathcal S$.
	
	\section{Preliminaries}
	\noindent The following we recall provide definitions and results regarding cross-connections needed in the sequel, for more details see \cite{kss}, \cite{sa}, \cite{s}. \\
	Let $\mathscr C$ to be a category. Then $\textit{\textbf v} \mathscr C$ denotes the object class of $\mathscr C$ and $\mathscr C$ itself denotes the morphism class of $\mathscr C$. For objects $a, b \in \textit{\textbf v} \mathscr C$, the collection of all morphisms from $a$ to $b$ is $\mathscr C(a, b)$, called a hom-set and for each $a \in \textit{\textbf v} \mathscr C,\, 1_a$ is the identity morphism in $\mathscr C(a, a)$. Let the identity functor on the category $\mathscr C$ be $1_{\mathscr C}$. A category $\mathscr C$ is termed as a small category if the class of all morphisms in $\mathscr C$ is a set.
	
	A category $\mathscr P$ is a preorder if for any $p, p' \in \textit{\textbf v} \mathscr P$, the hom-set $\mathscr P(p, p')$ contains at most one morphism. Let $\mathscr P$ be a preorder, then the relation $\subseteq$ on $\textit{\textbf v} \mathscr P$ defined by $p\subseteq p'$ if and only if $\mathscr P(p, p')\neq \phi$ is a quasiorder. A preorder $\mathscr P$ is said to be a strict preorder if $\subseteq$ on $\textit{\textbf v} \mathscr P$ is a partial order.
	\begin{defn}\emph{\cite{s}}
		A small category $\mathscr C$ is called a category with subobjects if $\mathscr C$ has a strict preorder $\mathscr P$ as a subcategory with $\textit{\textbf v} \mathscr P=\textit{\textbf v} \mathscr C$ such that every morphism in $\mathscr P$ is a monomorphism in $\mathscr C$ and if $f=hg$ for $f, g\in \mathscr P$, $h\in \mathscr C$, then $h \in \mathscr P$.
	\end{defn}
	\noindent In this case, the morphisms in $\mathscr P$ are called inclusions in $\mathscr C$, and a morphism $q$ in $\mathscr C$ is a retraction if $q$ is a right inverse of an inclusion in $\mathscr C$. A normal factorization of a morphism $f$ in a category $\mathscr C$ with subobjects is a factorization of the form $f=quj$ where $q$ is a retraction, $u$ is an isomorphism, and $j$ is an inclusion. If $f=quj$ is a normal factorization of $f$, then $f^\circ=qu$ is said to be the epimorphic component of $f$.
	
  Let $\mathscr C$ be a category with subobjects. A cone $\gamma$ in $\mathscr C$ with vertex $d\in \textit{\textbf v} \mathscr C$ is a map from $\textit{\textbf v} \mathscr C$ to $\mathscr C$ such that $\gamma(c)\in \mathscr C(c, d)$ $\forall c\in \textit{\textbf v} \mathscr C$, and if $j\in \mathscr C(c', c)$ is an inclusion in $\mathscr C$, then $j\gamma(c)=\gamma(c')$. For a cone $\gamma$ in $\mathscr C,\, c_\gamma$ denotes the vertex of $\gamma$ and for each $c\in \textit{\textbf v} \mathscr C$, the morphism $\gamma(c)$ is called the component of $\gamma$ at $c$. 
 A cone $\gamma$ with vertex $d$ is said to be a normal cone if at least one $c\in \textit{\textbf v} \mathscr C$ exists such that $\gamma(c)$ is an isomorphism from $c$ to $d$ \emph{\cite{a}}. 
	\begin{defn}\emph{(\cite{a},\cite{pa})}
		A normal category $\mathscr C$ is a category with subobjects such that every morphism in $\mathscr C$ has a normal factorization, every inclusion in $\mathscr C$ splits, and for each $d\in \textit{\textbf v} \mathscr C$, there is a normal cone $\gamma$ with vertex $d$ and $\gamma(d)=1_d$.
	\end{defn}

Two normal categories $\mathscr C$ and $\mathscr D$ are isomorphic if there exists inclusion preserving functors $F: \mathscr C \rightarrow \mathscr D$ and $G: \mathscr D \rightarrow \mathscr C$ such that $FG=1_{\mathscr C}$ and $GF=1_{\mathscr D}$ \noindent (\emph{\cite{s},\cite{t}}). 
If $\mathscr C$ is a normal category, then the set $T\mathscr C$ of all normal cones in $\mathscr C$ is a regular semigroup under the product defined by $(\gamma\cdot\sigma)(c)=\gamma(c)(\sigma(c_\gamma))^\circ$ for $\gamma, \sigma \in T\mathscr C$ and $c\in \textit{\textbf v} \mathscr C$, where $(\sigma(c_\gamma))^\circ$ is the epimorphic component of $\sigma(c_\gamma)$ (\emph{\cite{s},\cite{pa}}).

	\begin{prop}\label{p21}\cite{pa}
		If two normal categories $\mathscr C$ and $\mathscr D$ are isomorphic, then the semigroups $T\mathscr C$ and $T\mathscr D$ are isomorphic. 
	\end{prop}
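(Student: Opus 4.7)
The plan is to construct a natural bijection $T\mathscr C \to T\mathscr D$ directly from the categorical isomorphism and then verify it respects the cone product. Given the isomorphism $F:\mathscr C \to \mathscr D$ and $G:\mathscr D \to \mathscr C$ with $GF$ and $FG$ the respective identity functors, I would define, for each normal cone $\gamma \in T\mathscr C$ with vertex $c_\gamma$, a map $\Phi(\gamma):\textit{\textbf v}\mathscr D \to \mathscr D$ by
\[
\Phi(\gamma)(d') \,=\, F\bigl(\gamma(G(d'))\bigr), \qquad d' \in \textit{\textbf v}\mathscr D.
\]
Since $FG(d') = d'$, this lands in $\mathscr D(d', F(c_\gamma))$, so $\Phi(\gamma)$ is a candidate cone with vertex $F(c_\gamma)$.

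Next I would check $\Phi(\gamma)$ is a normal cone. For an inclusion $j \in \mathscr D(d_1',d_2')$, inclusion-preservation of $G$ gives that $G(j)$ is an inclusion in $\mathscr C$, so $G(j)\gamma(G(d_2')) = \gamma(G(d_1'))$; applying $F$ yields $j\,\Phi(\gamma)(d_2') = \Phi(\gamma)(d_1')$, confirming the cone axiom. Normality follows because if $\gamma(c)$ is an isomorphism for some $c$, then taking $d' = F(c)$, the component $\Phi(\gamma)(F(c)) = F(\gamma(c))$ is an isomorphism. An analogous construction $\Psi: T\mathscr D \to T\mathscr C$ using $G$ gives $\Psi\Phi = 1_{T\mathscr C}$ and $\Phi\Psi = 1_{T\mathscr D}$, using $GF = 1_{\mathscr C}$ and $FG = 1_{\mathscr D}$, so $\Phi$ is a bijection.

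The substantive step is showing $\Phi$ is a semigroup homomorphism, i.e.\ $\Phi(\gamma\cdot\sigma) = \Phi(\gamma)\cdot\Phi(\sigma)$. Unpacking the product rule $(\gamma\cdot\sigma)(c) = \gamma(c)(\sigma(c_\gamma))^\circ$, both sides at $d' \in \textit{\textbf v}\mathscr D$ reduce to comparing $F(\gamma(G(d')))\cdot F((\sigma(c_\gamma))^\circ)$ with $F(\gamma(G(d')))\cdot (F(\sigma(c_\gamma)))^\circ$ (after using $c_{\Phi(\gamma)} = F(c_\gamma)$ and $GF(c_\gamma) = c_\gamma$). So the whole matter collapses to the claim that $F$ preserves epimorphic components, i.e.\ $F(f^\circ) = (F(f))^\circ$ for every morphism $f$ in $\mathscr C$.

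This claim is the main obstacle, though it is not deep: I would argue that $F$ preserves normal factorizations. If $f = quj$ with $j$ an inclusion, $u$ an isomorphism, and $q$ a retraction (so $j'q = 1$ for some inclusion $j'$), then $F(j)$ is an inclusion (given), $F(u)$ is an isomorphism (functors preserve isomorphisms), and $F(j')F(q) = 1$ with $F(j')$ an inclusion forces $F(q)$ to be a retraction. Hence $F(f) = F(q)F(u)F(j)$ is a normal factorization in $\mathscr D$, and since the epimorphic component is determined by any normal factorization, $(F(f))^\circ = F(q)F(u) = F(qu) = F(f^\circ)$. With this, the homomorphism identity follows, and $\Phi$ is the desired semigroup isomorphism.
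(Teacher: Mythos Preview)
The paper does not supply its own proof of this proposition; it is stated with a citation to \cite{pa} and used as a black box. Your argument is correct and is precisely the standard one: transport cones along the isomorphism via $\Phi(\gamma)(d') = F(\gamma(G(d')))$, and reduce the homomorphism property to the fact that an inclusion-preserving isomorphism of normal categories preserves normal factorizations and hence epimorphic components. The only point worth flagging is that the line ``the epimorphic component is determined by any normal factorization'' is itself a (true, standard) lemma in Nambooripad's theory rather than an immediate consequence of the definitions, so in a self-contained write-up you might cite it; but granting that, nothing is missing.
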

	Let $S$ be a regular semigroup and $E(S)$ be the set of all idempotents in $S$.
		The category $\mathscr L(S)$ of principal left ideals of $S$ is a category whose objects are principal left ideals $Se$ of $S$ for $e\in E(S)$, and for objects $Se$ and $Sf$, the hom-set $\mathscr L(S)(Se, Sf)$ consists of partial right translations $\rho(e, u, f)$ for each $u\in eSf$, which maps $x\in Se$ to $xu\in Sf$ \cite{s}. 
		Let $S^{op}$ be the left-right dual of $S$. Then the category $\mathscr R(S)$ of principal right ideals of $S$ is $\mathscr L(S^{op})$ (\emph{\cite{kss},\cite{s}}).
	\begin{lem}\label{l21}\cite{s}
		Let $\mathscr L(S)$ be the category of principal left ideals of a regular semigroup $S$. Then,
		\begin{enumerate}
			\item $\mathscr L(S)$ is a category with subobjects whose inclusions are the usual set inclusions.
			\item For every $u, f \in E(S)$, the map $u\mapsto \rho(e, u, f)$ from $eSf$ to $\mathscr L(S)(Se, Sf)$ is a bijection.
			\item $\rho(e, u, f)=\rho(e', v, f')$ if and only if $e\mathscr L e'$, $f\mathscr L f'$, $u\in eSf$, $v\in e'Sf'$ and $v=e'u$.
			\item $\rho(e, u, f)$ is an isomorphism in $\mathscr L(S)$ if and only if $e \,\mathscr R\, u \,\mathscr L \,f$.
			\item If $Se\subseteq Sf$, then $j=\rho(e, e, f)$ is the inclusion from $Se$ to $Sf$, and $\rho$ is a retraction of $j$ if and only if $\rho=\rho(f, g, e)$ for some $g\in E(L_e)\cap \omega(f)$. 
			
		\end{enumerate}
		
	\end{lem}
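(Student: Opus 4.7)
\emph{Proof plan.} I would dispatch the five parts sequentially, with the composition rule for partial right translations
\[
\rho(e,u,f)\cdot\rho(f,v,h) \;=\; \rho(e,uv,h)\qquad (u\in eSf,\ v\in fSh)
\]
as the main tool. Parts (1) and (2) are largely bookkeeping. For (1), I identify the candidate strict preorder $\mathscr P$ with the set-theoretic inclusions $Se\subseteq Sf$ (which occur precisely when $e=ef$): the order axioms come from set theory, each inclusion is injective and so a monomorphism, and the factorization condition holds because a right translation $h$ that restricts to a set inclusion on a generating element is itself a set inclusion. For (2), the map $u\mapsto\rho(e,u,f)$ is manifestly well-defined, and evaluation at $e$ produces its inverse: for $\tau\in\mathscr L(S)(Se,Sf)$, set $u=\tau(e)\in eSf$, and $\tau=\rho(e,u,f)$ follows from the right-translation property.

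For (3), $\rho(e,u,f)=\rho(e',v,f')$ forces the domains and codomains to coincide, giving $Se=Se'$ and $Sf=Sf'$, i.e.\ $e\,\mathscr L\,e'$ and $f\,\mathscr L\,f'$; evaluating both maps at $e'\in Se$ then yields $v=e'v=e'u$, and the converse is the same computation read backwards. For (4), if $e\,\mathscr R\,u\,\mathscr L\,f$, I pick $u'\in S$ with $uu'=e$ and check via the composition rule that $\rho(f,fu',e)$ two-sidedly inverts $\rho(e,u,f)$. Conversely, any inverse of $\rho(e,u,f)$ is by (2) of the form $\rho(f,v,e)$; the composition rule then translates the two identities into $uv=e$ and $vu=f$, from which $u\,\mathscr R\,e$ and $u\,\mathscr L\,f$ fall out.

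Item (5) carries the main subtlety. The assertion about $j=\rho(e,e,f)$ is immediate: $Se\subseteq Sf$ gives $e=ef\in eSf$, and $\rho(e,e,f)$ acts as the identity on $Se$ because $x=xe$ for $x\in Se$. For the retraction part, let $\rho\in\mathscr L(S)(Sf,Se)$ be a retraction of $j$; by (2), $\rho=\rho(f,g,e)$ for some $g\in fSe$, so automatically $g=fg=ge$, and the retraction equation $j\rho=1_{Se}$ becomes $\rho(e,eg,e)=\rho(e,e,e)$, i.e.\ $eg=e$. The principal new step is to extract idempotence of $g$, which I would do by the short calculation $g^2=(ge)g=g(eg)=ge=g$. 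Once $g$ is idempotent, $g=ge\in Se$ together with $e=eg\in Sg$ give $g\,\mathscr L\,e$, while $fg=g$ and $gf=g(ef)=ge=g$ (using $ef=e$) give $g\in\omega(f)$. The converse is a direct verification using the standard identities $eg=e$, $ge=g$ for $\mathscr L$-related idempotents. I expect extracting idempotence of $g$, and then distilling the two Green's-class memberships, to be the step that requires the most careful bookkeeping.
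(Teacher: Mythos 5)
The paper does not prove this lemma at all: it is quoted verbatim from Nambooripad's \emph{Theory of Cross-connections} (reference \cite{s}) and used as an imported tool, so there is no internal proof to compare against. Judged on its own, your argument is essentially the standard one and is correct in all five parts: the composition rule $\rho(e,u,f)\rho(f,v,h)=\rho(e,uv,h)$, evaluation at the generating idempotent for (2) and (3), and the computation $g^2=(ge)g=g(eg)=ge=g$ together with $gf=gef=ge=g$ for (5) are exactly what is needed. The one point to tighten is in (4): an arbitrary $u'$ with $uu'=e$ need not satisfy $fu'\in fSe$, so $\rho(f,fu',e)$ is not a well-formed morphism as written; you should take $v=fu'e$ (or, equivalently, choose $u'$ to be the inverse of $u$ in $V(u)\cap fSe$, which exists since $e\in E(R_u)$ and $f\in E(L_u)$). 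With $v=fu'e$ one checks $uv=ufu'e=uu'e=e$ and, writing $f=wu$ (possible since $u\,\mathscr L\,f$), $vu=fu'eu=wuu'u=weu=wu=f$, so $\rho(f,v,e)$ is the required two-sided inverse. This is a local repair, not a gap in the overall strategy.
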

		Let $a\in S$ and $f\in E(L_a)$. The map $\rho^a$ from $\textit{\textbf v} \mathscr L(S)$ to $\mathscr L(S)$ defined by $\rho^a(Se)=\rho(e, ea, f)$ for $Se\in \textit{\textbf v} \mathscr L(S)$ is a normal cone in $\mathscr L(S)$ with vertex $Sa$. The normal cones of the form $\rho^a$ in $\mathscr L(S)$ are called principal cones \cite{s}. 
	
	\begin{prop}\cite{s}
		Let $S$ be a regular semigroup. Then, the map $a\mapsto \rho ^a$ is a homomorphism from $S$ to $T\mathscr L(S)$. 
	\end{prop}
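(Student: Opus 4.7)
\medskip

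\noindent\textbf{Proof proposal.} The plan is to show directly that $\rho^{ab}=\rho^{a}\cdot \rho^{b}$ in $T\mathscr L(S)$ for every $a,b\in S$, and then to appeal to the definition of the semigroup product on normal cones recalled in the preliminaries. First I would fix $f\in E(L_a)$, $g\in E(L_b)$, and $h\in E(L_{ab})$, so that
$$
\rho^{a}(Se)=\rho(e,ea,f),\qquad \rho^{b}(Se)=\rho(e,eb,g),\qquad \rho^{ab}(Se)=\rho(e,e(ab),h),
$$
with $c_{\rho^{a}}=Sa=Sf$, $c_{\rho^{b}}=Sb=Sg$, and $c_{\rho^{ab}}=S(ab)=Sh$. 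The first housekeeping step is to confirm that these objects really agree in $\mathscr L(S)$; for instance, $f=xa$ for some $x\in S$ gives $fb=xab\in Sab$, hence $Sfb=Sab$, which lets me identify the codomain of the composite cone with the vertex of $\rho^{ab}$.

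Next I would unwind the product formula $(\rho^{a}\cdot\rho^{b})(Se)=\rho^{a}(Se)\,\bigl(\rho^{b}(c_{\rho^{a}})\bigr)^{\!\circ}$. Here $\rho^{b}(c_{\rho^{a}})=\rho^{b}(Sf)=\rho(f,fb,g)$, a morphism from $Sf$ to $Sg$ whose image in $S$ is the principal left ideal $S(fb)=S(ab)=Sh$. Using Lemma~\ref{l21}, the normal factorization of $\rho(f,fb,g)$ is obtained by splitting the inclusion $Sh\hookrightarrow Sg$, so the epimorphic component $\bigl(\rho^{b}(Sf)\bigr)^{\!\circ}$ is a morphism $Sf\to Sh$ acting by $x\mapsto x\cdot fb$.

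Composing, the morphism $(\rho^{a}\cdot\rho^{b})(Se)\colon Se\to Sh$ sends $x\in Se$ first to $xa\in Sf$ and then to $xa\cdot fb=x(ab)=x\cdot e(ab)$, where the last equality uses $xe=x$. Appealing to Lemma~\ref{l21}(2), a hom-set morphism $Se\to Sh$ is uniquely determined by the element of $eSh$ that implements the right translation; here that element is $e(ab)$, which is exactly the element defining $\rho^{ab}(Se)=\rho(e,e(ab),h)$. Therefore $(\rho^{a}\cdot\rho^{b})(Se)=\rho^{ab}(Se)$ for every $e\in E(S)$, giving $\rho^{a}\cdot\rho^{b}=\rho^{ab}$ and hence the homomorphism property.

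The step I expect to require the most care is the identification of the epimorphic component $\bigl(\rho^{b}(Sf)\bigr)^{\!\circ}$: one needs to produce an inverse $(fb)'\in V(fb)$ with $(fb)'\cdot fb\in E(L_{fb})\cap \omega(g)$ so that Lemma~\ref{l21}(5) supplies the required retraction, and then to verify via Lemma~\ref{l21}(3)--(4) that the resulting morphism factors uniquely through $Sh$ and coincides with right translation by $fb$. Once this calibration is done, the algebraic identity $xa\cdot fb=xab$ is immediate and everything else reduces to the bijection in Lemma~\ref{l21}(2).
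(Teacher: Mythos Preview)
The paper does not supply its own proof of this proposition; it is quoted verbatim from Nambooripad \cite{s} and left unproved here. So there is no argument in the paper to compare against.

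That said, your proposed argument is the standard one and is correct. Two small remarks. First, the equality $Sfb=Sab$ can be obtained in one stroke from $Sf=Sa$ (since $f\,\mathscr L\,a$) by right-multiplying by $b$, rather than via the element $x$; as written, ``$fb=xab\in Sab$, hence $Sfb=Sab$'' only gives one inclusion. Second, your identification of the epimorphic component is exactly right: because $Sfb=Sh$ and $(fb)h=fb$, the morphism $\rho(f,fb,g)$ factors as $\rho(f,fb,h)\,\rho(h,h,g)$ with $\rho(f,fb,h)$ surjective, so $(\rho^b(Sf))^\circ=\rho(f,fb,h)$; the further splitting into a retraction and an isomorphism via an inverse $(fb)'\in V(fb)$ is available but not needed for the computation. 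After that, composing gives $\rho(e,ea,f)\,\rho(f,fb,h)=\rho(e,\,ea\cdot fb,\,h)=\rho(e,eab,h)$ using $af=a$, and Lemma~\ref{l21}(2) finishes the identification with $\rho^{ab}(Se)$.
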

	
	\section{Ideal categories of semigroup of operators on a Hilbet space}
	
	\noindent It is well known that finite rank bounded operators on a Hilbert space $H$ over a field $\textbf K$, where $\textbf K$ is either $\mathbb R$ or $\mathbb C$ is a regular semigroup under the function composition $(T_1T_2)(x)=T_2(T_1(x))$ for $T_1,T_2\in\mathcal S$, and $x\in H$ and denote it by $S$. In the following, we discuss properties of $\mathscr L(\mathcal S)$ and $\mathscr R(\mathcal S)$ when $\mathcal S$ is regarded as a normed algebra.
	
	The object set $\textit{\textbf v} \mathscr L(\mathcal S)$ of the category $\mathscr L(\mathcal S)$ of principal left ideals of $\mathcal S$ is $$\textit{\textbf v} \mathscr L(\mathcal S)=\{\mathcal SP: P\in E(\mathcal S)\},$$ 
	and for $P_1, P_2 \in E(\mathcal S)$, the hom-set $hom\,(\mathcal S P_1, \mathcal S P_2)$ is 
	$$\mathscr L(\mathcal S)(\mathcal S P_1, \mathcal S P_2)=\{\rho(P_1, T, P_2): T\in P_1 \mathcal S P_2\},$$ where the morphism $\rho(P_1, T, P_2)$ maps $A\in \mathcal SP_1$ to $AT\in \mathcal SP_2$. 
	It is clear that each object $\mathcal SP$ is a left ideal of the normed algebra $\mathcal S$, and for $T\in P_1\mathcal SP_2$, the morphism $\rho(P_1, T, P_2)$ is the composition operator determined by $T$ and $\rho(P_1, T, P_2)$ is a bounded linear map.
	Dually  of principal right ideals of $\mathcal S$, with morphisms of the form  $\lambda(P_2, T, P_1)$ maps $A\in \mathcal P_1S$ to $TA\in \mathcal P_2S$  
	is the category  $\mathscr R(\mathcal S)$. 
	For $T\in \mathcal S$, $R(T)$ and $Z(T)$ denote the range space and the zero space of $T$, respectively. We often use the following lemma without indicating it. 
	\begin{lem} \cite{sv}
		If $T_1, T_2 \in \mathcal S$, then
		\begin{enumerate}[label=\emph{(\roman*)}]
			\item $\mathcal ST_1 \subseteq \mathcal ST_2$ if and only if $R(T_1)\subseteq R(T_2)$. 
			\item $T_1\mathcal S \subseteq T_2\mathcal S$ if and only if $Z(T_2)\subseteq Z(T_1)$.
		\end{enumerate}
	\end{lem}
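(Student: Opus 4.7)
The plan is to prove (i) directly, being careful of the convention $(T_1T_2)(x)=T_2(T_1(x))$, and then obtain (ii) by a dual argument. Under this convention, an element of $\mathcal S T_1$ is a composite $T_1\circ A$ with $A\in\mathcal S$, whose range lies in $R(T_1)$; dually an element of $T_1\mathcal S$ is $A\circ T_1$, whose zero space contains $Z(T_1)$. Each of the four implications therefore reduces to producing or recognising such a composite factorisation.

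For the forward direction of (i), I would invoke regularity of $\mathcal S$: choose $T_1'\in\mathcal S$ with $T_1 T_1' T_1=T_1$, so that $T_1=(T_1T_1')T_1\in\mathcal S T_1\subseteq\mathcal S T_2$. Writing $T_1=BT_2=T_2\circ B$ then yields $R(T_1)=T_2(B(H))\subseteq R(T_2)$.

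For the converse, the key construction is a bounded finite-rank ``partial inverse'' $T_2^+\in\mathcal S$ of $T_2$. Since $T_2$ is bounded with finite-dimensional (hence closed) range, $H$ splits as $Z(T_2)\oplus Z(T_2)^\perp$, and $T_2$ restricts to a linear bijection $Z(T_2)^\perp\to R(T_2)$ between finite-dimensional Hilbert spaces; its inverse, extended by zero on $R(T_2)^\perp$, provides $T_2^+$ satisfying $T_2\circ T_2^+=\mathrm{id}_{R(T_2)}$ and $T_2^+\circ T_2=P_{Z(T_2)^\perp}$. Assuming $R(T_1)\subseteq R(T_2)$ and taking any $A\in\mathcal S$, the image of $T_1\circ A$ lies in $R(T_2)$, hence $T_2\circ(T_2^+\circ T_1\circ A)=T_1\circ A$; rewritten in semigroup notation this factors $AT_1$ through $T_2$, giving $\mathcal S T_1\subseteq\mathcal S T_2$.

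Part (ii) is symmetric. The forward direction applies regularity to place $T_1\in T_1\mathcal S\subseteq T_2\mathcal S$, whence $T_1=C\circ T_2$ forces $Z(T_2)\subseteq Z(T_1)$. Conversely, given $Z(T_2)\subseteq Z(T_1)$, the identity $T_2^+\circ T_2=P_{Z(T_2)^\perp}$ together with $T_1|_{Z(T_2)}=0$ yields $T_1=T_1\circ T_2^+\circ T_2$, so every $A\circ T_1$ factors through $T_2$. The only non-routine step is verifying that $T_2^+$ and the auxiliary composites lie in $\mathcal S$ — i.e., are bounded and of finite rank — but this is immediate because their ranges sit inside the finite-dimensional spaces $Z(T_2)^\perp$ or $R(T_1)$.
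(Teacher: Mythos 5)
Your proof is correct. Note that the paper itself offers no argument for this lemma --- it is quoted from the reference of Valanthara and Nambooripad --- so there is nothing internal to compare against; your write-up supplies the standard proof one would expect there. You handle the order-of-composition convention $(T_1T_2)(x)=T_2(T_1(x))$ consistently, which is the main place such an argument can go wrong, and the construction of the partial inverse $T_2^+$ from the orthogonal splitting $H=Z(T_2)\oplus Z(T_2)^\perp$ correctly yields a bounded finite-rank element of $\mathcal S$ (boundedness because $T_2|_{Z(T_2)^\perp}$ is a bijection between finite-dimensional spaces, finite rank because $R(T_2^+)=Z(T_2)^\perp$ is finite-dimensional). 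Two cosmetic points: the composite $T_2\circ T_2^+$ is, as a map on all of $H$, the orthogonal projection $P_{R(T_2)}$ rather than $\mathrm{id}_{R(T_2)}$ --- you use it only on vectors already in $R(T_2)$, so the argument is unaffected, but the identity as written is imprecise; and in the forward directions you invoke regularity of $\mathcal S$ to place $T_1$ inside $\mathcal ST_1$ (resp.\ $T_1\mathcal S$), which the paper does assert, though you could make the step self-contained by writing $T_1=(T_1T_1^+)T_1$ with the same explicit partial inverse applied to $T_1$.
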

	
	\noindent Hence, for principal left ideals $\mathcal ST_1$ and $\mathcal ST_2$, $\mathcal ST_1=\mathcal ST_2$ if and only if $R(T_1)=R(T_2)$, and for principal right ideals $T_1\mathcal S$ and $T_2\mathcal S$, $T_1\mathcal S=T_2\mathcal S$ if and only if $Z(T_1)=Z(T_2)$.

	It is known that $\mathcal S$ is an involution semigroup under the involution $T\mapsto T^*$, where $T^*$ is the adjoint of $T$ and an idempotent $P$ in $\mathcal S$ is a projection if and only if $Z(P)=R(P)^\perp$. Thus 
	the objects of $\mathscr L(\mathcal S)$ and $\mathscr R(\mathcal S)$ can uniquely be identified by the projections in $\mathcal S$.
	For every $\mathcal SP\in \textit{\textbf v} \mathscr L(\mathcal S)$, $R(P)$ is a finite-dimensional subspace of $H$, call it $M$, then $\mathcal SP=\mathcal SP_M$, where $P_M$ is the projection of $H$ onto $M$ and 
	$$\textit{\textbf v} \mathscr L(\mathcal S)=\{\mathcal S P_M: M \text{ is a finite-dimensional subspace of } H\}.$$
	Dually for the category $\mathscr R(\mathcal S)$ of principal right ideals of $\mathcal S$, if $P\in E(\mathcal S)$, then $P\mathcal S=P_M\mathcal S$, where $M=Z(P)^\perp$. For any finite-dimensional subspace $M$ of $H$, if $T\in \mathcal SP_M$, then $R(T)\subseteq M$. Conversely, if $T\in \mathcal S$ and $R(T)\subseteq M$, then $\mathcal ST\subseteq \mathcal SP_M$ implies $T\in \mathcal SP_M$. Hence, the operators in $\mathcal SP_M$ can be characterized as $\mathcal SP_M=\{T\in \mathcal S: R(T)\subseteq M\}.$ 
	\begin{align*}
	\text{But, } T\in P_M\mathcal S&\Longleftrightarrow T^*\in \mathcal SP_M\\
		&\Longleftrightarrow R(T^*)\subseteq M\\
		&\Longleftrightarrow M^\perp \subseteq Z(T).
	\end{align*}
	Thus, for finite-dimensional subspaces $M$ and $N$  of $H$, $T \in P_M\mathcal SP_N$ if and only if $T\in \mathcal S,\,  M^\perp \subseteq Z(T)$, and $R(T) \subseteq N$. Hence, morphisms in the hom-sets of the category $\mathscr L(\mathcal S)$ of principal left ideals of $\mathcal S$ can be identified as  $$\mathscr L(\mathcal S)(\mathcal SP_M, \mathcal SP_N)=\{\rho(P_M, T, P_N): T\in \mathcal S,\,  M^\perp \subseteq Z(T),\, R(T) \subseteq N\}.$$
	Summarizing the above results, we can state the following lemma.
	\begin{lem}\label{l32}
		Let $\mathscr L(\mathcal S)$ be the category of principal left ideals of $\mathcal S$. Then, the following holds.
		\begin{enumerate}
			\item $\textit{\textbf v} \mathscr L(\mathcal S)=\{\mathcal SP_M : M \text{ is a finite-dimensional subspace of } H\}$.
			\item If $M$ is a finite-dimensional subspace of $H$, then $\mathcal SP_M=\{T\in \mathcal S: R(T)\subseteq M\}.$ 
			\item If $M$ and $N$ are finite-dimensional subspaces of $H$, then the hom-set $$\mathscr L(\mathcal S)(\mathcal SP_M, \mathcal SP_N)=\{\rho(P_M, T, P_N): T\in \mathcal S,\,  M^\perp \subseteq Z(T),\, R(T) \subseteq N\}.$$
		\end{enumerate}
	\end{lem}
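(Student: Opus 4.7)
The plan is to assemble the lemma directly from the analysis already carried out in the paragraphs preceding the statement, which identifies every idempotent in $\mathcal S$ with a projection $P_M$ onto a finite-dimensional subspace $M$ of $H$, and which characterizes membership in sets of the form $\mathcal SP$ or $P\mathcal S$ through range and kernel containment. So the proof is essentially a bookkeeping argument; the only technical content is tracking the composition convention $(T_1T_2)(x)=T_2(T_1(x))$ carefully.

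For (1), I would start with an arbitrary object $\mathcal SP\in \textit{\textbf v}\mathscr L(\mathcal S)$. Since $P\in E(\mathcal S)$ has finite rank, $M:=R(P)$ is a finite-dimensional subspace of $H$, and Lemma on left-ideal equality gives $\mathcal SP=\mathcal SP_M$ because $R(P)=R(P_M)$. Conversely, every finite-dimensional $M\subseteq H$ yields a projection $P_M\in E(\mathcal S)$ and hence an object $\mathcal SP_M$, giving the stated equality of sets.

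For (2), the inclusion $\mathcal SP_M\subseteq\{T\in\mathcal S:R(T)\subseteq M\}$ is immediate: any element can be written as $AP_M$ with $A\in\mathcal S$, and with the paper's composition convention $(AP_M)(x)=P_M(A(x))\in M$. For the reverse inclusion, suppose $T\in\mathcal S$ with $R(T)\subseteq M$. Then for every $x\in H$ we have $T(x)\in M$, so $P_M(T(x))=T(x)$, which under the stated convention is exactly $(TP_M)(x)=T(x)$. Hence $T=TP_M\in\mathcal SP_M$.

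For (3), by the general description of hom-sets in $\mathscr L(\mathcal S)$ it suffices to characterize the set $P_M\mathcal SP_N$. One inclusion reads $P_M\mathcal SP_N\subseteq \mathcal SP_N\cap P_M\mathcal S$, and the equivalences displayed in the text just above the lemma show $T\in\mathcal SP_N\Leftrightarrow R(T)\subseteq N$ (by (2)) while $T\in P_M\mathcal S\Leftrightarrow T^*\in\mathcal SP_M\Leftrightarrow R(T^*)\subseteq M\Leftrightarrow M^\perp\subseteq Z(T)$, using the adjoint identity $Z(T)=R(T^*)^\perp$. For the reverse inclusion one checks that if $M^\perp\subseteq Z(T)$ and $R(T)\subseteq N$ then $P_MTP_N=T$: decomposing $x=x_M+x_{M^\perp}$ gives $T(P_M(x))=T(x_M)=T(x)$, and $R(T)\subseteq N$ gives $P_N(T(x))=T(x)$, so $(P_MTP_N)(x)=P_N(T(P_M(x)))=T(x)$, placing $T$ in $P_M\mathcal SP_N$.

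The statement presents essentially no obstacle: the only place one has to be careful is keeping the reversed composition convention straight when verifying $T=TP_M$ and $T=P_MTP_N$, because it is easy to conflate left versus right factors under the nonstandard order and mis-state which side the projection annihilates.
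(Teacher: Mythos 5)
Your proposal is correct and follows essentially the same route as the paper, which establishes this lemma by the discussion immediately preceding it: objects are identified with projections $P_M$ via $R(P)=M$, the set $\mathcal SP_M$ is characterized by range containment, and $P_M\mathcal S$ is characterized through the adjoint equivalence $T\in P_M\mathcal S\Leftrightarrow T^*\in\mathcal SP_M\Leftrightarrow M^\perp\subseteq Z(T)$. The only (harmless) difference is that where the paper invokes the cited lemma $\mathcal ST_1\subseteq\mathcal ST_2\Leftrightarrow R(T_1)\subseteq R(T_2)$ to get the reverse inclusions, you verify $T=TP_M$ and $T=P_MTP_N$ directly, correctly tracking the reversed composition convention.
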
	
\begin{exam}
	Let $H=\mathbb R^2$ and let $L$ be a line passing through the origin in $\mathbb R^2$. If $\theta$ is the angle between the line $L$ and the $X$-axis, then the projection of $H$ onto the line $L$ is given by $P_\theta=
	\begin{bmatrix}
		\cos^2\theta&\cos\theta \sin \theta\\
		\cos\theta \sin\theta&\sin^2\theta
	\end{bmatrix}.$ Thus, the objects of $\mathscr L(\mathcal S)$ are $\mathcal S P_H, \mathcal S P_{\{0\}},$ and $\mathcal S P_\theta$, where $0\leq\theta<\pi.$ 
\end{exam}
	\begin{exam}
		Let $H=\mathbb R^3, M=X$-axis and $N=YZ$-plane. Then,
		\begin{itemize}
			\item $\mathcal SP_M=\left\{\begin{bmatrix}a&0&0\\b&0&0\\c&0&0\end{bmatrix}: a, b, c\in \mathbb R\right\},$
			\item $\mathscr L(\mathcal S)(\mathcal SP_M, \mathcal SP_N)=\left\{\rho(P_M, T, P_N): T=\begin{bmatrix}
				0&a&b\\
				0&0&0\\
				0&0&0
			\end{bmatrix}; a, b\in \mathbb R\right\}.$
		\end{itemize}
	\end{exam}
	
	\noindent We now state the dual of the Lemma \ref{l32}.
	
	\begin{lem}
		Let $\mathscr R(\mathcal S)$ be the category of principal right ideals of $\mathcal S$. Then we have the following.
		\begin{enumerate}
			\item $\textit{\textbf v} \mathscr R(\mathcal S)=\{P_M\mathcal S : M \text{ is a finite-dimensional subspace of } H\}$.
			\item If $M$ is a finite-dimensional subspace of $H$, then $P_M\mathcal S=\{T\in \mathcal S: M^\perp \subseteq Z(T) \}$.
			\item If $M$ and $N$ are finite-dimensional subspaces of $H$, then the hom-set $$\mathscr R(\mathcal S)(P_M\mathcal S, P_N\mathcal S)=\{\lambda(P_M, T, P_N): T\in \mathcal S,\,  N^\perp \subseteq Z(T),\, R(T) \subseteq M\}.$$
		\end{enumerate}
	\end{lem}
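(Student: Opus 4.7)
The statement is the left-right dual of Lemma~\ref{l32}, and I would prove it by transporting that lemma across the involution $T \mapsto T^{*}$ of $\mathcal S$, using $\mathscr R(\mathcal S) = \mathscr L(\mathcal S^{op})$ together with the identity $T\mathcal S = (\mathcal S T^{*})^{*}$. The preliminary identifications in the text already do most of the bookkeeping: every idempotent $P$ in $\mathcal S$ satisfies $P\mathcal S = P_{M}\mathcal S$ for the projection $P_{M}$ onto $M = Z(P)^{\perp}$, so part~(1) is immediate. Conversely, for any finite-dimensional subspace $M \subseteq H$, $P_{M}\in E(\mathcal S)$ and $P_{M}\mathcal S \in \textit{\textbf v}\mathscr R(\mathcal S)$, giving the other inclusion.

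For part~(2), I would simply record the chain of equivalences already displayed just before the statement of Lemma~\ref{l32}:
\begin{align*}
T \in P_{M}\mathcal S &\iff T^{*} \in \mathcal S P_{M} \iff R(T^{*}) \subseteq M \iff M^{\perp} \subseteq Z(T),
\end{align*}
where the first step uses that $T \mapsto T^{*}$ is an involution on $\mathcal S$, the middle step is Lemma~\ref{l32}(2) applied to $T^{*}$, and the final step is the standard Hilbert space identity $Z(T) = R(T^{*})^{\perp}$.

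For part~(3), a morphism in $\mathscr R(\mathcal S)(P_{M}\mathcal S, P_{N}\mathcal S)$ is a left-translation $\lambda(P_{M}, T, P_{N})$ parametrized by an element $T$ lying in the intersection $P_{M}\mathcal S \cap \mathcal S P_{N}$; this follows from the analogue of Lemma~\ref{l21}(2) applied in $\mathscr R(\mathcal S) = \mathscr L(\mathcal S^{op})$. Combining part~(2) (which gives $M^{\perp}\subseteq Z(T^{*})$, i.e.\ $R(T)\subseteq M$, for membership in $P_{M}\mathcal S$) with Lemma~\ref{l32}(3) (which gives $R(T)\subseteq N$ when translated through the involution, i.e.\ $N^{\perp}\subseteq Z(T)$ for membership in $\mathcal S P_{N}$) yields the two required conditions $R(T)\subseteq M$ and $N^{\perp}\subseteq Z(T)$.

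The proof is essentially mechanical dualization with no real obstacle; the only point requiring care is keeping track of which subspace plays the role of range versus zero space under the involution, since the involution swaps $R(\cdot)$ and $Z(\cdot)^{\perp}$ and therefore reverses which of the two conditions attaches to $M$ and which to $N$. Once this bookkeeping is done, every clause of the lemma falls out from the corresponding clause of Lemma~\ref{l32}.
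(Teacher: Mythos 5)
Your overall strategy---dualize Lemma \ref{l32} through the involution $T\mapsto T^{*}$ and the identification $\mathscr R(\mathcal S)=\mathscr L(\mathcal S^{op})$---is exactly how the paper treats this lemma (it is stated without proof as the dual of Lemma \ref{l32}, relying on the adjoint computation $T\in P_M\mathcal S\Leftrightarrow T^{*}\in\mathcal SP_M\Leftrightarrow M^{\perp}\subseteq Z(T)$ displayed just before it), and your parts (1) and (2) are correct.

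Part (3), however, is garbled in a way that matters, because the entire content of that part is precisely the bookkeeping you acknowledge is delicate. By Lemma \ref{l21}(2) applied to $\mathcal S^{op}$, a morphism of $\mathscr R(\mathcal S)$ from $P_M\mathcal S$ to $P_N\mathcal S$ is parametrized by an element of $P_M\,\mathcal S^{op}\,P_N=P_N\mathcal SP_M=P_N\mathcal S\cap\mathcal SP_M$, \emph{not} by an element of $P_M\mathcal S\cap\mathcal SP_N$ as you wrote; the latter set parametrizes $\mathscr L(\mathcal S)(\mathcal SP_M,\mathcal SP_N)$, not the hom-set of $\mathscr R(\mathcal S)$. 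You then compound this by misquoting your own part (2): membership of $T$ in $P_M\mathcal S$ is equivalent to $M^{\perp}\subseteq Z(T)$ (that is exactly what part (2) asserts), not to $R(T)\subseteq M$; and membership in $\mathcal SP_N$ is equivalent to $R(T)\subseteq N$ by Lemma \ref{l32}(2), not to $N^{\perp}\subseteq Z(T)$. Your final pair of conditions $R(T)\subseteq M$ and $N^{\perp}\subseteq Z(T)$ is the correct one, but only because these two slips cancel. The repair is short: start from $T\in P_N\mathcal SP_M$, apply part (2) to $T\in P_N\mathcal S$ to get $N^{\perp}\subseteq Z(T)$, and apply Lemma \ref{l32}(2) to $T\in\mathcal SP_M$ to get $R(T)\subseteq M$.
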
	
\noindent Next, we proceed to establish that the hom-sets of $\mathscr L(\mathcal S)$ are normed spaces. Let $M$ and $N$ be finite-dimensional subspaces  of $H$. If $T_1, T_2\in P_M\mathcal S P_N$, then $T_1+T_2,\, kT_1\in P_M\mathcal S P_N$, for all $k\in \textbf K$. Hence, 
	\begin{align*}
		\rho(P_M, T_1, P_N)+\rho(P_M, T_2, P_N)&=\rho(P_M, T_1+T_2, P_N),\text { and }\\
		k\rho(P_M, T_1, P_N)&=\rho(P_M, kT_1, P_N).
	\end{align*}
define addition and scalar multiplication on $\mathscr L(\mathcal S)(\mathcal SP_M, \mathcal SP_N)$. Clearly $\mathscr L(\mathcal S)(\mathcal SP_M, \mathcal SP_N)$ is a linear space and for $T\in P_M \mathcal S P_N$, $\rho=\rho(P_M, T, P_N)$ is a bounded linear map from the normed space $\mathcal SP_M$ to the normed space $\mathcal SP_N$ and the operator norm is a norm on $\mathscr L(\mathcal S)(\mathcal SP_M, \mathcal SP_N)$. Clearly,
	$$\Arrowvert \rho\Arrowvert  \leq \Arrowvert T \Arrowvert.$$
	$$\text { But, }\;\; \frac{\Arrowvert (P_M) \rho\Arrowvert }{\Arrowvert P_M \Arrowvert} =\Arrowvert P_MT \Arrowvert= \Arrowvert T \Arrowvert,$$
	therefore, $\mathscr L(\mathcal S)(\mathcal SP_M, \mathcal SP_N)$ is a normed space with respect to the norm $\Arrowvert \rho(P_M, T, P_N)\Arrowvert =\Arrowvert T\Arrowvert$. Thus, we have the following proposition.
	\begin{prop}
		Let $M$ and $N$ be finite-dimensional subspaces of $H$. Then the hom-set $\mathscr L(\mathcal S)(\mathcal SP_M, \mathcal SP_N)$ of $\mathscr L(\mathcal S)$ is a normed space.
	\end{prop}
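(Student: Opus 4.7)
The plan is to exploit the bijection from Lemma \ref{l21}(2), which identifies $\mathscr L(\mathcal S)(\mathcal SP_M,\mathcal SP_N)$ with the subset $P_M\mathcal S P_N$ of the normed algebra $\mathcal S$, and then transport the normed-space structure along this bijection. The excerpt already indicates the intended formulae, so the work is to verify that each step is well-defined.

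First I would establish that $P_M\mathcal S P_N$ is a linear subspace of $\mathcal S$: this is immediate because $T_1,T_2\in P_M\mathcal S P_N$ and $k\in\mathbf K$ give $kT_1+T_2=P_M(kT_1+T_2)P_N$, so the set is closed under sums and scalar multiples. Using the bijection of Lemma \ref{l21}(2), I would then transfer addition and scalar multiplication to $\mathscr L(\mathcal S)(\mathcal SP_M,\mathcal SP_N)$ via
\[
\rho(P_M,T_1,P_N)+\rho(P_M,T_2,P_N):=\rho(P_M,T_1+T_2,P_N),\qquad k\rho(P_M,T,P_N):=\rho(P_M,kT,P_N).
\]
The vector space axioms on the hom-set are inherited from the vector space axioms of $P_M\mathcal S P_N$, so no independent verification is required. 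One should also check briefly that these operations agree with the pointwise action on $\mathcal SP_M\to\mathcal SP_N$, i.e.\ that $(\rho_1+\rho_2)(A)=A(T_1+T_2)=AT_1+AT_2$, which is immediate from the ring structure of $\mathcal S$.

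Next I would define $\|\rho(P_M,T,P_N)\|:=\|T\|$, where the right-hand side is the operator norm of $T$ on $H$. The three norm axioms (positivity, absolute homogeneity, triangle inequality) then transfer from the operator norm on $\mathcal S$, using Lemma \ref{l21}(3) together with the fact that within $P_M\mathcal S P_N$ the representative $T$ of the morphism $\rho(P_M,T,P_N)$ is uniquely determined. What remains is the identification of this quantity with the operator norm of $\rho$ viewed as a bounded linear map between the normed spaces $\mathcal SP_M$ and $\mathcal SP_N$. The inequality $\|\rho\|\le\|T\|$ is immediate from $\|AT\|\le\|A\|\|T\|$, and the reverse inequality follows by evaluating $\rho$ at the specific element $P_M\in\mathcal SP_M$, since $\|P_M\|=1$ and $\|P_M T\|=\|T\|$ (because $T=P_M T$). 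This is the one step that could be mistaken for a difficulty but is dispatched by the projection identity.

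The only mild obstacle to watch for is ensuring that the norm of $T$ as an operator on $H$ genuinely coincides with the operator norm of $\rho$ viewed intrinsically as a morphism between normed subspaces of $\mathcal S$ (rather than just being one of many possible norms). The argument in the excerpt handles this cleanly by testing on $P_M$. Once these pieces are assembled, $(\mathscr L(\mathcal S)(\mathcal SP_M,\mathcal SP_N),\|\cdot\|)$ is a normed space, which is the content of the proposition.
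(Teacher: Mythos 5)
Your proposal is correct and follows essentially the same route as the paper: transfer the linear structure of $P_M\mathcal S P_N$ to the hom-set via the bijection of Lemma \ref{l21}(2), set $\Arrowvert\rho(P_M,T,P_N)\Arrowvert=\Arrowvert T\Arrowvert$, and identify this with the operator norm of $\rho:\mathcal SP_M\to\mathcal SP_N$ by combining $\Arrowvert AT\Arrowvert\leq\Arrowvert A\Arrowvert\,\Arrowvert T\Arrowvert$ with evaluation at $P_M$, using $P_MT=T$. Your explicit remark that $T=P_MT$ justifies the step the paper leaves implicit.
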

	\noindent The dual of this proposition is:
	\begin{prop}
		If $M$ and $N$ are finite-dimensional subspaces of $H$, then the hom-set $\mathscr R(\mathcal S)(P_M\mathcal S, P_N\mathcal S)$ of $\mathscr R(\mathcal S)$ is a normed space under the functions
		\begin{align*}
			\lambda(P_M, T_1, P_N)+\lambda(P_M, T_2, P_N)&=\lambda(P_M, T_1+T_2, P_N),\\
			k\lambda(P_M, T, P_N)&=\lambda(P_M, kT, P_N), \text { and }\\
			\Arrowvert \lambda(P_M, T, P_N)\Arrowvert &=\Arrowvert T\Arrowvert,
		\end{align*}
		for  $T, T_1, T_2 \in P_N\mathcal SP_M$ and $k\in \textbf K.$
	\end{prop}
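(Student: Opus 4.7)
The plan is to dualize the argument just given for the hom-sets of $\mathscr L(\mathcal S)$, working with left multiplication in place of right multiplication. First I would verify that the proposed operations are well defined: since $P_N\mathcal SP_M$ is closed under addition and scalar multiplication (being the intersection of the principal left ideal $\mathcal SP_M$ with the principal right ideal $P_N\mathcal S$ inside the normed algebra $\mathcal S$), for $T_1,T_2\in P_N\mathcal SP_M$ and $k\in\mathbf{K}$ both $T_1+T_2$ and $kT_1$ lie in $P_N\mathcal SP_M$, so the right-hand sides of the two defining equations are bona fide morphisms in $\mathscr R(\mathcal S)(P_M\mathcal S, P_N\mathcal S)$. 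The linear space axioms on the hom-set then transfer along the parametrization $T\mapsto\lambda(P_M,T,P_N)$, which is bijective by the right-ideal analogue of Lemma \ref{l21}(2).

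Next, the map $\lambda=\lambda(P_M,T,P_N)\colon P_M\mathcal S\to P_N\mathcal S$ sends $A$ to $TA$, so $\|TA\|\le\|T\|\|A\|$ yields the upper bound $\|\lambda\|\le\|T\|$ on the operator norm. For the reverse inequality I would evaluate $\lambda$ at the element $P_M\in P_M\mathcal S$: since $T\in P_N\mathcal SP_M$ may be written as $T=S P_M$ for some $S\in\mathcal S$, we have $TP_M=SP_MP_M=SP_M=T$, so $\lambda(P_M)=T$. Using $\|P_M\|=1$ when $M\ne\{0\}$ (the case $M=\{0\}$ forces $T=0$, making the identity trivial), this gives $\|\lambda\|\ge\|T\|$, and hence $\|\lambda(P_M,T,P_N)\|=\|T\|$.

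The three norm axioms on $\mathscr R(\mathcal S)(P_M\mathcal S,P_N\mathcal S)$ are then immediate from this identity together with the fact that $\|\cdot\|$ is a norm on $\mathcal S$: positivity and absolute homogeneity transfer directly, and the triangle inequality reduces to $\|T_1+T_2\|\le\|T_1\|+\|T_2\|$. I do not expect a substantive obstacle here; the proof is essentially a bookkeeping dualization of the $\mathscr L(\mathcal S)$ case, with the only real point of care being the right-handed reduction $TP_M=T$ replacing the left-handed identity $P_MT=T$ used in the previous argument.
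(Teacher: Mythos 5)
Your proposal is correct and follows essentially the same route the paper takes: the paper states this proposition without proof as the dual of the $\mathscr L(\mathcal S)$ case, whose proof likewise bounds the operator norm above by submultiplicativity and below by evaluating the translation at the projection $P_M$ (using $\|P_M\|=1$) to conclude $\|\lambda(P_M,T,P_N)\|=\|T\|$. Your added checks of well-definedness and the degenerate case $M=\{0\}$ are harmless refinements of the same argument.
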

	
	It is known that for finite-dimensional subspaces $M$ and $N$ of $H$, $\mathcal SP_M\subseteq \mathcal SP_N$ if and only if $M \subseteq N$ and the norm of any inclusion is $1$. Next we show, for each inclusion in $\mathscr L(\mathcal S)$, there exists a unique retraction with norm $1$.
	\begin{prop}
		Let $M$ and $N$ be finite-dimensional subspaces of $H$ with $M\subseteq N$. Then $\rho(P_N, P_M, P_M)$ is the unique retraction to the inclusion $\rho(P_M, P_M, P_N)$ with norm $1$.
	\end{prop}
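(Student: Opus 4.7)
The plan is to split the argument into an existence/admissibility check and a uniqueness argument, with the uniqueness resting on the classical Hilbert-space fact that a non-zero idempotent has operator norm $1$ if and only if it is the orthogonal projection onto its range.

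First I would verify that $\rho(P_N,P_M,P_M)$ is a well-defined morphism in $\mathscr L(\mathcal S)(\mathcal SP_N,\mathcal SP_M)$ and that it retracts the inclusion. Since $M\subseteq N$, the orthogonal projections satisfy $P_MP_N=P_NP_M=P_M$; in particular $R(P_M)=M$ and $N^\perp\subseteq M^\perp=Z(P_M)$, so by Lemma \ref{l32}(3), $P_M\in P_N\mathcal SP_M$. Composing with the inclusion gives
\[
\rho(P_M,P_M,P_N)\cdot\rho(P_N,P_M,P_M)=\rho(P_M,P_MP_M,P_M)=\rho(P_M,P_M,P_M)=1_{\mathcal SP_M},
\]
so it is a retraction. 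Its norm is $\|\rho(P_N,P_M,P_M)\|=\|P_M\|=1$.

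For uniqueness, I would invoke Lemma \ref{l21}(5), which states that every retraction of the inclusion $\rho(P_M,P_M,P_N)$ is of the form $\rho(P_N,g,P_M)$ with $g\in E(L_{P_M})\cap\omega(P_N)$. Translated into the Hilbert-space language of Section 3, $g$ is an idempotent in $\mathcal S$ with $R(g)=R(P_M)=M$ and $N^\perp\subseteq Z(g)$. The norm of this retraction is $\|\rho(P_N,g,P_M)\|=\|g\|$, so uniqueness of the norm-$1$ retraction reduces to: among idempotents in $\mathcal S$ with range $M$, only the orthogonal projection $P_M$ has norm $1$.

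The main obstacle is exactly this last fact, which is a classical but genuine input. I would either cite it or give a brief self-contained argument: for any non-zero idempotent $g$ with range $M$, $\|g\|\ge 1$ because $g$ fixes every vector of $M$; if $g\neq P_M$ then $Z(g)$ is not orthogonal to $M$, so one can choose $v\in M$ and $w\in Z(g)$ with $\langle v,w\rangle\neq 0$, rescale $w$ so that $\|v+w\|^2=\|v\|^2-\|w\|^2<\|v\|^2=\|g(v+w)\|^2$, and conclude $\|g\|>1$. Hence $g=P_M$, establishing uniqueness. (The degenerate case $M=\{0\}$ makes both the inclusion and the retraction the zero morphism, and should be noted as an implicit exclusion from the statement.)
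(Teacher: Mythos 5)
Your proof is correct and follows essentially the same route as the paper: both reduce uniqueness, via Lemma \ref{l21}(5), to the classical fact that a norm-one idempotent on a Hilbert space is an orthogonal projection --- the paper simply asserts this ("thus, $P$ is a projection"), whereas you supply a short self-contained argument for it, and your rescaling to achieve $\|v+w\|^2=\|v\|^2-\|w\|^2$ does go through. Your extra observations --- the explicit existence/norm check for $\rho(P_N,P_M,P_M)$ and the degenerate case $M=\{0\}$, where the retraction is the zero morphism of norm $0$ --- address points the paper glosses over.
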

	\begin{proof}
		Let $P_1, P_2\in E(\mathcal S)$ with $R(P_1)=N$ and $R(P_2)=M$. If $\rho(P_1, P, P_2)$ is a retraction to $\rho(P_M, P_M, P_N)$ with norm $1$, then $P\in E(\mathcal S)$ and $\Arrowvert P \Arrowvert=1$, thus, $P$ is a projection. By Lemma \ref{l21}, $\rho(P_1, P, P_2)=\rho(P_N, P_NP, P_M) $.
		But, \begin{align*}
			\rho(P_M, P_M, P_M)&=\rho(P_M, P_M, P_N)\rho(P_1, P, P_2)=\rho(P_M, P_M, P_N)\rho(P_N, P_NP, P_M) \\
			&=\rho(P_M, P_MP_NP, P_M)=\rho(P_M, P_MP, P_N)
		\end{align*}
		hence, $P_M=P_MP$ by Lemma \ref{l21}. So, $P=P_M$ and $\rho(P_1, P, P_2)=\rho(P_N, P_NP_M, P_M)=\rho(P_N, P_M, P_M)$.
	\end{proof}
	
	For $T\in P_M\mathcal S P_N$, $\rho(P_M, T, P_N)$ is an isomorphism from $\mathcal SP_M$ to $\mathcal SP_N$ if and only if $P_M \,\mathscr R\, T \,\mathscr L\, P_N$ by Lemma \ref{l21}. But, $P_M \,\mathscr R\, T \,\mathscr L\, P_N$ if and only if $Z(T)=M^\perp$ and $R(T)=N$. Thus, the following proposition.
	\begin{prop}
		Let $T\in P_M\mathcal S P_N$. Then, $\rho(P_M, T, P_N)$ is an isomorphism from $\mathcal SP_M$ to $\mathcal SP_N$ if and only if $T|_M: M \rightarrow N$ is an isomorphism.
	\end{prop}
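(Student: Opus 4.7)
The setup in the paragraph immediately before the statement essentially reduces the proposition to an exercise in linear algebra: by Lemma \ref{l21}(4) together with the translation of Green's relations in $\mathcal S$ into range/kernel conditions (as discussed right after Lemma \ref{l32}), we already know that $\rho(P_M,T,P_N)$ is an isomorphism in $\mathscr L(\mathcal S)$ if and only if $Z(T)=M^\perp$ and $R(T)=N$. The only work left is to recognize that these two conditions are jointly equivalent to $T|_M\colon M\to N$ being a vector space isomorphism, under the standing hypothesis $T\in P_M\mathcal SP_N$, i.e.\ $M^\perp\subseteq Z(T)$ and $R(T)\subseteq N$.

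My plan is as follows. First, I would use the orthogonal decomposition $H=M\oplus M^\perp$ together with $M^\perp\subseteq Z(T)$ to observe that for every $x\in H$, writing $x=x_M+x_{M^\perp}$, one has $Tx=T(x_M)$. This immediately yields $R(T)=T(H)=T(M)=\mathrm{im}(T|_M)$. Consequently, since $R(T)\subseteq N$ is already given, the condition $R(T)=N$ is equivalent to surjectivity of $T|_M\colon M\to N$.

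Next, for the injectivity side, I would again exploit $M^\perp\subseteq Z(T)$ and the decomposition $H=M\oplus M^\perp$ to write $Z(T)=M^\perp\oplus (Z(T)\cap M)$, so that $Z(T)=M^\perp$ if and only if $Z(T)\cap M=\{0\}$, which in turn is exactly the injectivity of $T|_M$. Combining the two equivalences, $Z(T)=M^\perp$ and $R(T)=N$ hold simultaneously if and only if $T|_M$ is a bijective linear map from $M$ onto $N$, i.e.\ a linear isomorphism (since $M$ and $N$ are finite dimensional, boundedness is automatic).

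I do not anticipate a genuine obstacle here; the only mildly delicate point is the identification $R(T)=T(M)$, which must be justified by explicitly using $M^\perp\subseteq Z(T)$ rather than taking it for granted. Once that is stated cleanly, the forward and backward implications fall out symmetrically from the two decompositions above.
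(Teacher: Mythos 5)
Your proposal is correct and follows the same route as the paper: the paper also invokes Lemma \ref{l21}(4) to reduce the claim to $P_M\,\mathscr R\,T\,\mathscr L\,P_N$, translates this into $Z(T)=M^\perp$ and $R(T)=N$, and then passes (without further comment) to the statement about $T|_M$. Your only addition is to write out explicitly, via $H=M\oplus M^\perp$ and $M^\perp\subseteq Z(T)$, why those two conditions are exactly surjectivity and injectivity of $T|_M\colon M\to N$ — a step the paper leaves implicit.
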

	
	Thus it is seen that normal factorization for each morphism $\rho(P_M, T, P_N)$ in $\mathscr L(\mathcal S)$ as $\rho(P_M, T, P_N)=\rho(P_M, P_U, P_U)\rho(P_U, T, P_V)\rho(P_V, P_V, P_N)$, where $U=Z(T)^\perp$ and $V=R(T)$. 
		
	\noindent As a left /right dual the proposition below follows
	\begin{prop}
		Let $M$ and $N$ be finite-dimensional subspaces of $H$. Then we have the following.
		\begin{enumerate}
			\item $P_M\mathcal S\subseteq P_N\mathcal S$ if and only if $M \subseteq N$.
			\item If $M\subseteq N$, then $\lambda(P_N, P_M, P_M)$ is the unique retraction of the inclusion $\lambda(P_M, P_M, P_N)$ with norm $1$.
			\item Let $T\in P_N\mathcal S P_M$. Then, $\lambda(P_M, T, P_N)$ is an isomorphism from $P_M\mathcal S$ to $P_N\mathcal S$ if and only if $T|_N$ is an isomorphism from $N$ onto $M$.
			\item For $T\in P_N\mathcal S P_M$, $\lambda(P_M, T, P_N)=\lambda(P_M, P_V, P_V)\lambda(P_V, T, P_U)\lambda(P_U, P_U, P_N)$, where $U=Z(T)^\perp$ and $V=R(T)$ is a normal factorization of $\lambda(P_M, T, P_N)$. 
		\end{enumerate}
	\end{prop}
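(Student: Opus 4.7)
The plan is to derive this proposition from the left-ideal statements already proved, using the involution $T\mapsto T^*$ on $\mathcal S$ together with the structural identification $\mathscr R(\mathcal S)=\mathscr L(\mathcal S^{op})$. Since $P$ is a projection iff $P^*=P$, and since $Z(T^*)=R(T)^\perp$ and $R(T^*)=Z(T)^\perp$, the map $T\mapsto T^*$ sends $P_M\mathcal SP_N$ bijectively onto $P_N\mathcal SP_M$ and carries the filter ``$R(T)\subseteq M$, $M^\perp\subseteq Z(T)$'' for left ideals to the filter ``$R(T)\subseteq M$, $N^\perp\subseteq Z(T)$'' used for right ideals. This is the only translation device we will need.

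For (1), I would simply observe that $P_M\mathcal S\subseteq P_N\mathcal S$ iff $Z(P_N)\subseteq Z(P_M)$ (by the lemma cited from \cite{sv}), and since $Z(P_M)=M^\perp$ and $Z(P_N)=N^\perp$ for projections, this is equivalent to $N^\perp\subseteq M^\perp$, i.e. $M\subseteq N$. For (2), I would copy the argument given for $\mathscr L(\mathcal S)$: if $\lambda(P_1,P,P_2)$ is a retraction of the inclusion $\lambda(P_M,P_M,P_N)$ of norm $1$, then $P$ is an idempotent with $\|P\|=1$, hence a projection; applying the dual of Lemma \ref{l21}(3) to normalise the representing triple (so that $P_1=P_N$, $P_2=P_M$) and then imposing the retraction identity $\lambda(P_M,P_M,P_M)=\lambda(P_N,P_N P,P_M)\lambda(P_M,P_M,P_N)$ forces $PP_M=P_M$, which combined with $P\in P_N\mathcal SP_M$ and $\|P\|=1$ pins $P$ down to $P_M$.

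For (3), $\lambda(P_M,T,P_N)$ is an isomorphism iff $P_M\,\mathscr R\,T\,\mathscr L\,P_N$ by the dual of Lemma \ref{l21}(4); using the standard identification of Green's relations in $\mathcal S$, this rewrites as $Z(T)=N^\perp$ and $R(T)=M$. Under these conditions the restriction $T|_N:N\to M$ is injective (since $Z(T)=N^\perp$ means $T$ is zero exactly off $N$) and surjective onto $M=R(T)$, hence an isomorphism; the converse is a direct rank/nullity calculation for a bounded linear map between finite dimensional spaces. For (4), with $U=Z(T)^\perp$ and $V=R(T)$ we have inclusions $V\subseteq M$ and $U\subseteq N$, so $\lambda(P_M,P_V,P_V)$ is a retraction by (2), $\lambda(P_U,P_U,P_N)$ is the inclusion, and by (3) the middle factor $\lambda(P_V,T,P_U)$ is an isomorphism because $T|_U:U\to V$ is bijective. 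A direct verification using the composition rule $\lambda(P,S_1,Q)\lambda(Q,S_2,R)=\lambda(P,S_2S_1,R)$ and the identities $TP_U=T=P_VT$ then gives the claimed factorisation.

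The only non-routine point is part (2): one must be careful to first normalise the representatives of the retraction $\lambda(P_1,P,P_2)$ via the dual of Lemma \ref{l21}(3) before substituting into the retraction identity, and to invoke the fact that a norm-one idempotent in $\mathcal B(H)$ is an orthogonal projection; beyond that, everything is a mechanical dualisation of the arguments already supplied for $\mathscr L(\mathcal S)$.
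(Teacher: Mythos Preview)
Your approach is precisely the one the paper takes: the proposition is stated there without proof as the left/right dual of the previously established $\mathscr L(\mathcal S)$ results, and you have simply written out that dualisation in detail.

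Two small slips to fix. In part~(2) your displayed retraction identity has the factors in the wrong order: in the paper's left-to-right composition convention the inclusion must precede the retraction, so the correct equation is
\[
\lambda(P_M,P_M,P_M)=\lambda(P_M,P_M,P_N)\,\lambda(P_N,P_NP,P_M)=\lambda(P_M,\,P_NP\cdot P_M,\,P_M),
\]
which (using $P_NP=P$ for the projection $P$) yields $PP_M=P_M$ as you claim. In part~(4), with the convention $(T_1T_2)(x)=T_2(T_1(x))$ the identities you need are $P_UT=T$ and $TP_V=T$ (not $TP_U=T=P_VT$); with these the composite indeed collapses to $\lambda(P_M,\,P_UTP_V,\,P_N)=\lambda(P_M,T,P_N)$. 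Neither slip affects the substance of the argument.
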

	
	\section{Isomorphism between $\mathscr L(\mathcal S)$ and $\mathscr R(\mathcal S)$ }
	
	\noindent Let $\mathscr F(H)$ be the category whose vertices are finite-dimensional subspaces of a Hilbert space $H$. For finite-dimensional subspaces $M$ and $N$ of $H$, the hom-set $\mathscr F(H)(M, N)$ is the set of all linear maps from $M$ to $N$. Then $\mathscr F(H)$ is a category with subobjects in which inclusions are the usual subspace inclusion.
	For $M\subseteq N$, denote the inclusion map from $M$ to $N$ by $J_M^N$. Obviously, every inclusion in $\mathscr F(H)$ splits and for any linear map $T:M\longrightarrow N$, $T=PT_0J$ is a normal factorization of $T$, where $P$ is the projection of $M$ onto $Z(T)^\perp$, $T_0=T|_{Z(T)^\perp}$ is the isomorphism from $Z(T)^\perp$ onto $R(T)$ and $J$ is the inclusion map from $R(T)$ to $N$. Moreover, for each $M\in \textit{\textbf v} \mathscr F(H)$, the map $\gamma: \textit{\textbf v}\mathscr F(H) \longrightarrow \mathscr F(H)$ defined by $$\gamma(N)=(P_M)|_N: N \longrightarrow M, $$
	for $N\in \textit{\textbf v}\mathscr F(H)$ is a normal cone in $\mathscr F(H)$ with vertex $M$ and $\gamma(M)=1_M$,that is., $\mathscr F(H)$ is a normal category. Further, for each finite-dimensional subspaces $M$ and $N$ of $H$, the linear space $\mathscr F(H)(M, N)$ is a normed space with respect to the operator norm.  
	
	The following theorem says that the normal categories $\mathscr L(\mathcal S)$ and $\mathscr F(H)$ are isomorphic and the isomorphism preserves normed space structure of hom-sets. 
	
	\begin{thm}\label{t41}
		The normal category $\mathscr L(\mathcal S)$ is isomorphic to the normal category $\mathscr F(H)$, and for finite-dimensional subspaces $M$ and $N$ of $H$, this isomorphism is a normed space isomorphism from the hom-set $\mathscr L(\mathcal S)(\mathcal SP_M, \mathcal SP_N)$ of $\mathscr L(\mathcal S)$ onto the hom-set $\mathscr F(H)(M, N)$ of $\mathscr F(H)$.
	\end{thm}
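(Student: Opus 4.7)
The plan is to build the isomorphism from the obvious object correspondence $\mathcal SP_M \leftrightarrow M$ and, on morphisms, to exploit the fact that an operator $T\in P_M\mathcal SP_N$ is completely determined by its restriction $T|_M\colon M\to N$ (since $M^\perp\subseteq Z(T)$ forces $T$ to vanish off $M$) and that, conversely, any linear map $f\colon M\to N$ extends uniquely to an element of $P_M\mathcal SP_N$ via $\widetilde f := f\circ P_M$.

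Concretely, I would define $F\colon \mathscr L(\mathcal S)\to \mathscr F(H)$ by
\[
F(\mathcal SP_M)=M,\qquad F(\rho(P_M,T,P_N))=T|_M,
\]
and $G\colon \mathscr F(H)\to \mathscr L(\mathcal S)$ by
\[
G(M)=\mathcal SP_M,\qquad G(f)=\rho(P_M,\widetilde f,P_N)\ \text{with}\ \widetilde f=fP_M.
\]
Well-definedness of $F$ is immediate from the characterization $P_M\mathcal SP_N=\{T:M^\perp\subseteq Z(T),\,R(T)\subseteq N\}$ in Lemma \ref{l32}. For $G$, one checks $M^\perp=Z(P_M)\subseteq Z(\widetilde f)$ and $R(\widetilde f)\subseteq R(f)\subseteq N$, so $\widetilde f\in P_M\mathcal SP_N$. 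Functoriality of $F$ reduces to the identity $(T_1T_2)|_M = (T_2|_N)\circ(T_1|_M)$ whenever $T_1\in P_M\mathcal SP_N$ and $T_2\in P_N\mathcal SP_L$ (recall the convention $(T_1T_2)(x)=T_2(T_1(x))$), and identities are preserved because $P_M|_M=1_M$. Functoriality of $G$ is similar, using that $\widetilde{g\circ f}=\widetilde g\circ \widetilde f$ on $H$ thanks to $P_N\widetilde f=\widetilde f$.

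To see that $F$ and $G$ are mutually inverse I would verify $FG=1_{\mathscr F(H)}$ directly from $\widetilde f|_M=f$, and $GF=1_{\mathscr L(\mathcal S)}$ from the identity $(T|_M)\circ P_M=T$ for $T\in P_M\mathcal SP_N$ (again using $M^\perp\subseteq Z(T)$). That both functors preserve inclusions follows at once from the observation in the paper that $\mathcal SP_M\subseteq \mathcal SP_N\iff M\subseteq N$, combined with the fact that the inclusion $\rho(P_M,P_M,P_N)$ is sent by $F$ to $P_M|_M=1_M\colon M\hookrightarrow N=J_M^N$, and conversely $G(J_M^N)=\rho(P_M,P_M,P_N)$.

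Finally, for the normed-space statement, linearity of $F$ and $G$ on each hom-set is immediate from the definitions of addition and scalar multiplication of the $\rho$'s and of $\widetilde{(\cdot)}$. The only genuinely quantitative point — and the only step I expect to require a short argument — is the isometry: one must show $\|T\|=\|T|_M\|$ for $T\in P_M\mathcal SP_N$. The inequality $\|T|_M\|\le\|T\|$ is trivial. For the reverse, any $x\in H$ decomposes as $x=P_Mx+(I-P_M)x$ with $(I-P_M)x\in M^\perp\subseteq Z(T)$, whence $Tx=T(P_Mx)=(T|_M)(P_Mx)$ and $\|Tx\|\le\|T|_M\|\,\|P_Mx\|\le\|T|_M\|\,\|x\|$. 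Combined with the already-established identity $\|\rho(P_M,T,P_N)\|=\|T\|$, this proves that $F$ restricts to a surjective linear isometry $\mathscr L(\mathcal S)(\mathcal SP_M,\mathcal SP_N)\to \mathscr F(H)(M,N)$, completing the theorem.
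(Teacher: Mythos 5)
Your proposal is correct and follows essentially the same route as the paper: the same functors $F(\mathcal SP_M)=M$, $F(\rho(P_M,T,P_N))=T|_M$ and $G(f)=\rho(P_M,fP_M,P_N)$ (the paper writes the latter as $\rho(P_M,P_MTP_N,P_N)$, which coincides with your $\widetilde f$ since $R(f)\subseteq N$), the same verification that they are mutually inverse inclusion-preserving functors, and the same isometry identity $\Arrowvert T\Arrowvert=\Arrowvert T|_M\Arrowvert$ for $T\in P_M\mathcal SP_N$. Your explicit argument for the reverse inequality via $x=P_Mx+(I-P_M)x$ is a slightly more careful spelling-out of the step the paper records as $\Arrowvert T|_M\Arrowvert=\Arrowvert P_MT\Arrowvert=\Arrowvert T\Arrowvert$, but it is the same idea.
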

	\begin{proof}
		To prove $\mathscr L(\mathcal S)$ is isomorphic to $\mathscr F(H)$ as normal categories, it will suffices to show that there exists 
		order-preserving functors $F: \mathscr L(\mathcal S) \rightarrow \mathscr F(H)$ and $G: \mathscr F(H) \rightarrow \mathscr L(\mathcal S)$ such that $FG=1_{\mathscr L(\mathcal S)}$ and $GF=1_{\mathscr F(H)}$.
		Define $F: \mathscr L(\mathcal S) \rightarrow \mathscr F(H)$ by,
		\begin{align*}
			\text{ for } \mathcal SP_M \in \textit{\textbf v}\mathscr L(\mathcal S), &\;F(\mathcal SP_M)=M,\\
			\text{ and for } \rho(P_M, T, P_N): \mathcal SP_M \rightarrow \mathcal SP_N, &\;F(\rho(P_M, T, P_N))=T|_M: M \rightarrow N.
		\end{align*}
		If $T_1\in P_M\mathcal S P_N$ and $T_2\in P_N\mathcal S P_U$, then \begin{align*}
			F(\rho(P_M, T_1, P_N)\rho(P_N, T_2, P_U))&=F(\rho(P_M, T_1T_2, P_U))=(T_1T_2)|_M =(T_1|_M)(T_2|_N)\\
			&=F(\rho(P_M, T_1, P_N))F(\rho(P_N, T_2, P_U)),
		\end{align*}
		
		\noindent and $F(1_{\mathcal S P_M}) $ $ =F(\rho(P_M, P_M, P_M))=P_M|_M=1_M$. Therefore, $F$ is a functor from $\mathscr L(\mathcal S)$ to $\mathscr F(H)$.
		
		\noindent Similarly, define $G$ from $\mathscr F(H)$ to $\mathscr L(\mathcal S)$ 
		by $G(M)=\mathcal SP_M$ where $M\in \textit{\textbf v}\mathscr F(H)$, and for 
		$T:M\rightarrow N, \;G(T)=\rho(P_M, P_MTP_N, P_N): \mathcal SP_M \rightarrow 
		\mathcal SP_N.$
		For $T_1: M\rightarrow N$ and $T_2: N\rightarrow U$,
		\begin{align*}
			G(T_1T_2)&=\rho(P_M, P_MT_1T_2P_U, P_U) =\rho(P_M, P_MT_1P_NT_2P_U, P_U) \\ &=\rho(P_M, P_MT_1P_N, P_N)\rho(P_N, P_NT_2P_U, P_U)=G(T_1)G(T_2),
		\end{align*}
		and $G(1_M)=\rho(P_M, P_M1_MP_M, P_M)=\rho(P_M, P_M, P_M)=1_{\mathcal S P_M}$, 
		ie., $G$ is a functor from $\mathscr F(H)$ to $\mathscr L(\mathcal S)$. 
		
	Since $$\mathcal SP_M \subseteq \mathcal SP_N  \Longleftrightarrow M\subseteq N,$$ 
	$F$ and $G$ are inclusion preserving functors with $FG=1_{\mathscr L(\mathcal S)}$ and $GF=1_{\mathscr F(H)}$.
		
		Finally, for finite-dimensional subspaces $M$ and $N$  of $H$, $F$ is a normed space isomorphism from  $\mathscr L(\mathcal S)(\mathcal SP_M, \mathcal SP_N)$ onto $\mathscr F(H)(M, N)$. For, let $T, T_1, T_2\in P_M\mathcal S P_N$ and $k\in \textbf K$. Then,
		\begin{align*}
			F(\rho(P_M, T_1, P_N)+\rho(P_M, T_2, P_N))&=F(\rho(P_M, T_1+T_2, P_N))=(T_1+T_2)|_M=(T_1|_M)+(T_2|_M)\\
			&=F(\rho(P_M, T_1, P_N))+F(\rho(P_M, T_2, P_N)), and
		\end{align*}
		\begin{align*}
			F(k\rho(P_M, T, P_N))&= F(\rho(P_M, kT, P_N))= (kT)|_M=k(T|_M)= k F(\rho(P_M, T, P_N)),\\
			\text{and }\Arrowvert F(\rho(P_M, T, P_N))\Arrowvert&= \Arrowvert T|_M \Arrowvert= \Arrowvert P_MT \Arrowvert=\Arrowvert T \Arrowvert=\Arrowvert \rho(P_M, T, P_N)\Arrowvert.
		\end{align*}
		Since $F$ is full and faithful, $F$ is a normed space isomorphism from  the hom-set $\mathscr L(\mathcal S)(\mathcal SP_M, \mathcal SP_N)$ of $\mathscr L(\mathcal S)$ onto the hom-set $\mathscr F(H)(M, N)$ of $\mathscr F(H)$.
	\end{proof}
	
	Let $H'$ denote the dual of the Hilbert space $H$. The category $\mathscr F(H')$ of all finite-dimensional subspaces of $H'$.
	 is a normal category. $M$ be a finite-dimensional subspace of $H$, for each $m\in M$, define the map $f_m:H\rightarrow \textbf K$ by
	 $f_m(x)=\langle x,m \rangle$ for $x\in H$. For $M'=\{f_m\in H': m\in M\}$, by Riesz representation theorem, $$\textit{\textbf v}\mathscr F(H')=\{M':M\in \textit{\textbf v}\mathscr F(H)\}.$$ Moreover, for finite-dimensional subspaces $M$ and $N$ of $H$, $\phi:M'\rightarrow N'$ is a linear map if and only if there exists a unique linear map $T:M\rightarrow N$ such that $\phi=(P_NT^*P_M)'|_{M'}$, where $T^*$ is the adjoint of $T$ and $(P_NT^*P_M)'$ is the transpose of $P_NT^*P_M$.
	 Then, $F:\mathscr F(H)\to \mathscr F(H')$ defined by 
	 \begin{align*}
	 	 F(M)&=M',\,\text({for}\,M\in \textit{\textbf v}\mathscr F(H)\\
	 	 F(T)&=(P_NT^*P_M)'|_{M'},\,\text{for every}\,\,T:M\to N
 		 \end{align*}
	 is a normal category isomorphism, for $T, T_1, T_2: M \rightarrow N$ and $k\in \textbf K$, 
	\begin{align*}
		F(T_1+T_2)&=(P_N(T_1+T_2)^*P_M)'|_{M'}=(P_NT_1^*P_M)'|_{M'}+(P_NT_2^*P_M)'|_{M'}=F(T_1)+F(T_2), \\
		F(kT)&= (P_N(kT)^*P_M)'|_{M'}= \bar k(P_NT^*P_M)'|_{M'}= \bar k F(T),\\
		\text{ and }  \Arrowvert F(T)\Arrowvert&= \Arrowvert (P_NT^*P_M)'|_{M'} \Arrowvert= \Arrowvert (P_NT^*P_M)' \Arrowvert = \Arrowvert P_NT^*P_M\Arrowvert = \Arrowvert T^* \Arrowvert= \Arrowvert T \Arrowvert. \end{align*}
	Therefore, $F$ is a conjugate linear isometry from the hom-set $\mathscr F(H)(M, N)$ of $\mathscr F(H)$ onto the hom-set $\mathscr F(H')(M', N')$ of $\mathscr F(H')$.
	Thus, we obtain the following theorem using the Hilbert space duality.
	
	\begin{thm}\label{t51}
		The normal categories $\mathscr F(H)$ and $\mathscr F(H')$ are isomorphic, and this isomorphism is a conjugate linear isometry from the hom-set $\mathscr F(H)(M, N)$ of $\mathscr F(H)$ onto the hom-set $\mathscr F(H')(M', N')$ of $\mathscr F(H')$ for any finite-dimensional subspaces $M$ and $N$ of $H$.
	\end{thm}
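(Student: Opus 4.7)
The strategy is to construct the isomorphism $F: \mathscr F(H) \to \mathscr F(H')$ explicitly from the Riesz representation theorem and to read off all the analytic properties from well-understood identities between an operator, its adjoint, and its transpose. On objects I would set $F(M) = M'$, where $M' = \{f_m : m \in M\}$ is the finite-dimensional subspace of $H'$ furnished by Riesz; this is an inclusion-preserving bijection between $\textit{\textbf v}\mathscr F(H)$ and $\textit{\textbf v}\mathscr F(H')$. On morphisms I would define $F(T) = (P_N T^* P_M)'|_{M'}$ for any linear $T: M \to N$, using the adjoint $T^*$ and the transpose ${}'$ of the compressed operator.

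The central technical step is to verify the characterization flagged just before the theorem: every linear map $\phi: M' \to N'$ arises, uniquely, as $(P_N T^* P_M)'|_{M'}$ for some $T: M \to N$. Once this is in hand, $F$ is automatically a bijection on each hom-set, so full and faithful, and the inverse assignment $\phi \mapsto T$ extends to a functor $G: \mathscr F(H') \to \mathscr F(H)$ with $FG = 1_{\mathscr F(H')}$ and $GF = 1_{\mathscr F(H)}$. I would prove the correspondence by viewing $\phi$ as a linear map between finite-dimensional Hilbert spaces (via Riesz), transporting it back to a linear map $M \to N$, and checking uniqueness from the fact that the Riesz map is an antilinear isometric bijection on each finite-dimensional subspace.

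Next I would verify that $F$ is a functor: composition preservation follows from $(ST)^* = T^*S^*$ together with the contravariance of the transpose and the fact that the intermediate projection $P_N$ acts as the identity on the relevant ranges, while $F(1_M) = (P_M P_M P_M)'|_{M'} = 1_{M'}$. Inclusion preservation is immediate since $M \subseteq N$ forces $M' \subseteq N'$, and the projections restrict compatibly. For the hom-set structure, the computations displayed in the paragraph before the theorem give additivity, conjugate linearity (the conjugate appearing because $(kT)^* = \bar k T^*$), and the isometry identity $\|F(T)\| = \|T\|$, which chains together $\|(P_N T^* P_M)'\| = \|P_N T^* P_M\|$, $\|P_N T^* P_M\| = \|T^*\|$ (since $T^*$ is already supported between the right subspaces), and $\|T^*\| = \|T\|$.

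The main obstacle is the existence/uniqueness clause for $T$ given $\phi$, since all other verifications are routine algebraic identities among adjoints, transposes, and projections. A secondary point worth checking carefully is the isometry $\|P_N T^* P_M\| = \|T\|$; this is where one uses that, in our setting, $T: M \to N$ genuinely has range in $N$ and is zero on $M^\perp$ after extension, so pre- and post-composition with $P_M, P_N$ do not shrink the norm. After that, the conclusion that $F$ is a normal category isomorphism restricting to a conjugate linear isometry on each hom-set follows directly.
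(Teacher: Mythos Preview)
Your proposal is correct and follows essentially the same route as the paper: the same functor $F(M)=M'$, $F(T)=(P_NT^*P_M)'|_{M'}$, the same use of the Riesz correspondence to get the bijection on hom-sets, and the identical chain $\|F(T)\|=\|(P_NT^*P_M)'\|=\|P_NT^*P_M\|=\|T^*\|=\|T\|$ for the isometry and conjugate linearity. If anything, you are more explicit than the paper, which asserts the normal-category isomorphism and records only the hom-set computations before stating the theorem.
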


\noindent Analogous Theorem \ref{t41}, we have the following theorem.
	\begin{thm}\label{t42}
		The normal category $\mathscr R(\mathcal S)$ is isomorphic to the normal category $\mathscr F(H')$. For each finite-dimensional subspace $M$ and $N$ of $H$, this isomorphism is a normed space isomorphism of hom-set $\mathscr R(\mathcal S)(P_M\mathcal S, P_N\mathcal S)$ of $\mathscr R(\mathcal S)$ onto the hom-set $\mathscr F(H')(M', N')$ of $\mathscr F(H')$.
	\end{thm}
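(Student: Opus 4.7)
Theorem~\ref{t42} is the right-ideal analogue of Theorem~\ref{t41} combined with the Hilbert space duality of Theorem~\ref{t51}, so I would factor the desired isomorphism as the composite
\[
\mathscr R(\mathcal S)\xrightarrow{\hat F_0}\mathscr F(H)\xrightarrow{F}\mathscr F(H'),
\]
where $F$ is the functor of Theorem~\ref{t51} and $\hat F_0$ is a right-ideal analogue of the functor from Theorem~\ref{t41}. The natural choice for $\hat F_0$ is built out of the Hilbert adjoint, so it will only be \emph{conjugate} linear on hom-sets; since $F$ is likewise conjugate linear on hom-sets, the composite $\hat F=F\circ\hat F_0$ will be genuinely linear, which is the form demanded by the statement.

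Concretely, I would set $\hat F_0(P_M\mathcal S)=M$ on objects and, on a morphism $\lambda(P_M,T,P_N)$ with $T\in P_N\mathcal S P_M$, define $\hat F_0(\lambda(P_M,T,P_N))=T^*|_M\colon M\to N$. This is well-defined because the adjoint satisfies $T^*\in P_M\mathcal S P_N$, so $R(T^*)\subseteq N$ and $T^*$ vanishes on $M^\perp$, making $T^*|_M$ a genuine linear map into $N$. An inverse $\hat G_0$ is obtained by sending $M$ to $P_M\mathcal S$ and a linear map $S\colon M\to N$ to $\lambda(P_M,\tilde S^*,P_N)$, where $\tilde S\in P_M\mathcal S P_N$ is the zero-extension of $S$ to $\mathcal S$.

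Next I would verify that $\hat F_0$ is a functor, preserves inclusions, and is mutually inverse to $\hat G_0$. Composition in $\mathscr R(\mathcal S)$ is $\lambda(P_M,T_1,P_N)\,\lambda(P_N,T_2,P_U)=\lambda(P_M,T_2T_1,P_U)$ because $\lambda$ is a left translation while the semigroup product on $\mathcal S$ is reversed function composition; applying $\hat F_0$ and using $(T_2T_1)^*=T_1^*T_2^*$ converts this into the ordinary function composition $T_2^*|_N\circ T_1^*|_M$ in $\mathscr F(H)$, giving functoriality. Since $P_M^*=P_M$ and $P_M|_M=1_M$, $\hat F_0$ sends each inclusion $\lambda(P_M,P_M,P_N)$ (for $M\subseteq N$) to the subspace inclusion $M\hookrightarrow N$, and the identities $\hat F_0\hat G_0=1$ and $\hat G_0\hat F_0=1$ drop out from the fact that $T^*$ automatically vanishes on $M^\perp$ whenever $T\in P_N\mathcal S P_M$. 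On each hom-set, $\hat F_0$ becomes the map $\lambda(P_M,T,P_N)\mapsto T^*|_M$, which is additive, conjugate linear in $T$, and an isometry since $\|T^*|_M\|=\|T^*\|=\|T\|$. Composing with the conjugate linear isometric bijection on hom-sets supplied by Theorem~\ref{t51} then yields, via cancellation of the two conjugations, the linear isometric bijection $\mathscr R(\mathcal S)(P_M\mathcal S,P_N\mathcal S)\to\mathscr F(H')(M',N')$ asserted in the theorem.

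The main obstacle will be keeping three conventions consistent at once: the reversed semigroup product $(T_1T_2)(x)=T_2(T_1(x))$ in $\mathcal S$, the left-translation definition of $\lambda$, and the contravariance of both the Hilbert adjoint and the transpose. Once these are tracked carefully, the functor and inclusion checks for $\hat F_0$ reduce to routine calculations, and the only substantive new point beyond the $\mathscr L$-case bookkeeping is the observation that composing two conjugate linear isometries produces the linear isometry required by the statement.
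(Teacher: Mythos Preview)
Your proposal is correct, and the composite functor you build in fact coincides with the one the paper writes down directly: tracing through your $\hat F=F\circ\hat F_0$ on a morphism $\lambda(P_M,T,P_N)$ gives $F(T^*|_M)=(P_N(T^*|_M)^*P_M)'|_{M'}=T'|_{M'}$, since $T\in P_N\mathcal S P_M$ already.

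The route, however, is genuinely different from the paper's. The paper simply defines $F(P_M\mathcal S)=M'$ and $F(\lambda(P_M,T,P_N))=T'|_{M'}$ in one step using the Banach transpose, then appeals to the verification in Theorem~\ref{t41} for the functor, inclusion, and norm checks. Because $T\mapsto T'$ is already linear, the normed-space-isomorphism claim on hom-sets is immediate and needs no conjugation bookkeeping. Your approach instead factors through $\mathscr F(H)$ via the Hilbert adjoint, introducing a conjugate linearity that you then cancel against the conjugate linearity of Theorem~\ref{t51}. What your approach buys is a conceptual explanation of \emph{why} the statement holds (it is literally the right-ideal analogue of Theorem~\ref{t41} post-composed with Hilbert duality) and a more explicit justification of functoriality and linearity; what the paper's approach buys is brevity, since working with the transpose from the outset avoids the adjoint detour entirely.
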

	\begin{proof}
		Define $F: \mathscr R(\mathcal S) \rightarrow \mathscr F(H')$ by,
		\begin{align*}
			\;F(P_M\mathcal S)=M\, & \text{ for } P_M\mathcal S \in 
			\textit{\textbf v}\mathscr R(\mathcal S)\\
			\text{ and for } \lambda(P_M, T, P_N): P_M\mathcal S \rightarrow P_N\mathcal S,& \;F(\lambda(P_M, T, P_N))=T'|_{M'}:M'\rightarrow N'.
		\end{align*}
		As in the proof of Theorem \ref{t41}, it is seen that $F$ is a normal category isomorphism from $\mathscr R(\mathcal S)$ to $\mathscr F(H')$, and also a normed space isomorphism from the hom-set $\mathscr R(\mathcal S)(P_M\mathcal S, P_N\mathcal S)$ of $\mathscr R(\mathcal S)$ onto the hom-set $\mathscr F(H')(M', N')$ of $\mathscr F(H')$ of finite-dimensional subspaces $M$ and $N$.
	\end{proof}
	
	\noindent From the above discussions we can conclude that the functor $F$ from $\mathscr L(\mathcal S)$ to $\mathscr R(\mathcal S)$ given by,
	\begin{align*}
		F(\mathcal SP_M )= P_M \mathcal S,\,&\,	\text{for\;} \mathcal SP_M \in \textit{\textbf v}\mathscr L(\mathcal S),\\
		\text{and for\;}\rho(P_M, T, P_N) : \mathcal SP_M \rightarrow \mathcal SP_N, &\;   F(\rho(P_M, T, P_N)) = \lambda(P_M, T^*, P_N)\end{align*}
	is a normal category isomorphism from $\mathscr L(\mathcal S)$ to $\mathscr R(\mathcal S)$ and for finite-dimensional subspaces $M$ and $N$ of $H$, $F$ is a  conjugate-linear isometry from the hom-set $\mathscr L(\mathcal S)(\mathcal SP_M, \mathcal SP_N)$ of $\mathscr L(\mathcal S)$ onto the hom-set $\mathscr R(\mathcal S)(P_M\mathcal S, P_N\mathcal S)$ of $\mathscr R(\mathcal S)$. Thus we have the following:
	\begin{thm}\label{t44}
		The normal category $\mathscr L(\mathcal S)$ is isomorphic to the normal category $\mathscr R(\mathcal S)$. Moreover, this isomorphism is a conjugate-linear isometry from the hom-set $\mathscr L(\mathcal S)(\mathcal SP_M, \mathcal SP_N)$ of $\mathscr L(\mathcal S)$ onto the hom-set $\mathscr R(\mathcal S)(P_M\mathcal S, P_N\mathcal S)$ of $\mathscr R(\mathcal S)$ for finite-dimensional subspaces $M$ and $N$ of $H$.
	\end{thm}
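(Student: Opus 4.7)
The paragraph preceding the theorem already pinpoints the functor to use, namely $F(\mathcal SP_M) = P_M\mathcal S$ on objects and $F(\rho(P_M,T,P_N)) = \lambda(P_M, T^*, P_N)$ on morphisms. My plan is to verify in order: (a) $F$ is well defined, (b) $F$ is a functor that preserves inclusions, (c) the analogously defined $G(\lambda(P_M,T,P_N)) = \rho(P_M, T^*, P_N)$ is a two-sided inverse, and (d) on hom-sets $F$ is conjugate linear and norm preserving. The key inputs are the standard identities $T^{**}=T$, $(T_1T_2)^*=T_2^*T_1^*$, $(kT)^*=\bar k T^*$, $\|T^*\|=\|T\|$, and the fact that a finite rank operator $T$ has closed range together with $R(T^*)=Z(T)^\perp$ and $Z(T^*)=R(T)^\perp$.

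For well-definedness, if $T\in P_M\mathcal SP_N$ so that $M^\perp\subseteq Z(T)$ and $R(T)\subseteq N$, then taking orthogonal complements and using the range/kernel identities above immediately gives $N^\perp\subseteq Z(T^*)$ and $R(T^*)\subseteq M$, so $T^*\in P_N\mathcal SP_M$ and $\lambda(P_M,T^*,P_N)$ is indeed a legitimate morphism of $\mathscr R(\mathcal S)$. Functoriality relies on the direction reversal in $\mathscr R(\mathcal S)$: the composite $\lambda(P_M,T_1^*,P_N)\lambda(P_N,T_2^*,P_U)$ sends $A\in P_M\mathcal S$ to $T_2^*T_1^*A$, which matches $\lambda(P_M,(T_1T_2)^*,P_U) = F(\rho(P_M,T_1,P_N)\rho(P_N,T_2,P_U))$ thanks to $(T_1T_2)^* = T_2^*T_1^*$. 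Identity preservation follows from $P_M^*=P_M$, and inclusions go to inclusions because $\mathcal SP_M\subseteq\mathcal SP_N \iff M\subseteq N \iff P_M\mathcal S\subseteq P_N\mathcal S$, with $\rho(P_M,P_M,P_N)\mapsto\lambda(P_M,P_M,P_N)$. The equalities $FG=1_{\mathscr R(\mathcal S)}$ and $GF=1_{\mathscr L(\mathcal S)}$ drop out of $T^{**}=T$.

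Conjugate linearity and isometry on each hom-set are then immediate from the same three scalar identities: $F(\rho(P_M,T_1+T_2,P_N)) = \lambda(P_M,T_1^*+T_2^*,P_N)$, $F(k\rho(P_M,T,P_N)) = \lambda(P_M,\bar k T^*,P_N) = \bar k F(\rho(P_M,T,P_N))$, and $\|F(\rho(P_M,T,P_N))\| = \|T^*\| = \|T\| = \|\rho(P_M,T,P_N)\|$. The only delicate point, and really the only thing that can go wrong in this proof, is the order reversal in composition: because $\mathscr R(\mathcal S) = \mathscr L(\mathcal S^{op})$, the contravariance of the adjoint $(T_1T_2)^* = T_2^*T_1^*$ must cancel the reversal between $\mathscr L$-composition and $\mathscr R$-composition, and one has to set up the variable names carefully so that this cancellation is visible rather than becoming a sign-of-a-sign mistake.

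As a sanity check, Theorem \ref{t44} also follows by composing the isomorphisms of Theorems \ref{t41}, \ref{t51}, and \ref{t42}: the resulting composite $\mathscr L(\mathcal S)\to\mathscr F(H)\to\mathscr F(H')\to\mathscr R(\mathcal S)$ is a normal category isomorphism whose restriction to each hom-set is a normed space isomorphism followed by a conjugate linear isometry followed by a normed space isomorphism, hence itself a conjugate linear isometry. This indirect route avoids the composition-reversal bookkeeping entirely, at the cost of being less explicit than the direct $T\mapsto T^*$ description.
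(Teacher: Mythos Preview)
Your proposal is correct, and in fact you cover both routes. The paper itself takes what you call the ``sanity check'' approach: it does not give a separate proof of Theorem~\ref{t44} but simply records, in the paragraph preceding the statement, that the composite of the isomorphisms from Theorems~\ref{t41}, \ref{t51}, and \ref{t42} is the functor $F(\mathcal SP_M)=P_M\mathcal S$, $F(\rho(P_M,T,P_N))=\lambda(P_M,T^*,P_N)$, and that the conjugate-linear isometry property on hom-sets is inherited from the middle factor. Your primary argument --- the direct verification that $T\mapsto T^*$ furnishes an inclusion-preserving functor with inverse $T\mapsto T^{**}=T$ and the required conjugate linearity and isometry --- is more self-contained and avoids the detour through $\mathscr F(H)$ and $\mathscr F(H')$; the price is the composition-reversal bookkeeping you flag, which you handle correctly (with the paper's convention $(T_1T_2)(x)=T_2(T_1(x))$ one indeed has $(T_1T_2)^*=T_2^*T_1^*$, and this cancels the reversal in the $\lambda$-composition of $\mathscr R(\mathcal S)$). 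Either route is acceptable; the direct one is arguably cleaner once Lemma~\ref{l32} and its dual are in hand.
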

	
	\section{Bounded normal cones}
	\noindent It is well known that isomorphic normal categories have isomorphic semigroups of normal cones (see Proposition \ref{p21}). Since $\mathscr F(H)$, $\mathscr F(H')$, $\mathscr L(\mathcal S)$ and $\mathscr R(\mathcal S)$ are isomorphic as normal categories, it will sufficies to find the semigroup $T\mathscr F(H)$ of all normal cones in $\mathscr F(H)$.
	\begin{thm}\label{t61}
		The semigroup $T\mathscr F(H)$ of all normal cones in $\mathscr F(H)$ is isomorphic to the semigroup of all finite rank operators on the Hilbert space $H$. 
	\end{thm}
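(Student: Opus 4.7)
The plan is to construct mutually inverse maps between the semigroup $\mathcal F$ of finite-rank linear operators on $H$ and $T\mathscr F(H)$, and to verify that this bijection respects the Nambooripad product on cones. For $T \in \mathcal F$, I would define $\gamma^T : \textit{\textbf v}\mathscr F(H) \to \mathscr F(H)$ by $\gamma^T(N) := T|_N$ for every finite-dimensional subspace $N$ of $H$. Since $T(N) \subseteq R(T)$ and $R(T)$ is finite-dimensional, each $\gamma^T(N)$ lies in $\mathscr F(H)(N, R(T))$, and the inclusion-compatibility $J \cdot \gamma^T(N) = \gamma^T(N')$ for $J : N' \hookrightarrow N$ is the tautological equality of restrictions. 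Normality follows by picking any finite-dimensional subspace $c$ of $H$ for which $T|_c : c \to R(T)$ is a linear isomorphism (for instance $c = Z(T)^\perp$ when $T$ is bounded), so $\gamma^T$ is a normal cone with vertex $R(T)$.

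Conversely, given $\gamma \in T\mathscr F(H)$ with vertex $M$, I would define $T_\gamma : H \to H$ by $T_\gamma(x) := \gamma(\mathrm{span}(x))(x)$ for $x \neq 0$ and $T_\gamma(0) := 0$. For linearity, fix $x,y \in H$ and scalars $\alpha,\beta$, set $N := \mathrm{span}(x,y)$, and apply cone-compatibility to the inclusions $\mathrm{span}(x),\ \mathrm{span}(y),\ \mathrm{span}(\alpha x + \beta y) \hookrightarrow N$; each of the three values of $T_\gamma$ then coincides with the corresponding value of the single linear map $\gamma(N)$, and linearity transfers. The containment $R(T_\gamma) \subseteq M$ shows $T_\gamma \in \mathcal F$. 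The two assignments are mutually inverse by inspection: $T_{\gamma^T}(x) = T|_{\mathrm{span}(x)}(x) = T(x)$, and for $N \in \textit{\textbf v}\mathscr F(H)$ and $x \in N$ cone-compatibility forces $\gamma^{T_\gamma}(N)(x) = T_\gamma(x) = \gamma(\mathrm{span}(x))(x) = \gamma(N)(x)$.

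To finish, I would check that $T \mapsto \gamma^T$ is a semigroup homomorphism. Unfolding $(\gamma^{T_1} \cdot \gamma^{T_2})(N) = \gamma^{T_1}(N) \cdot (\gamma^{T_2}(c_{\gamma^{T_1}}))^\circ$ with $c_{\gamma^{T_1}} = R(T_1)$, the morphism $\gamma^{T_2}(R(T_1))$ is $T_2|_{R(T_1)} : R(T_1) \to R(T_2)$, and its epimorphic component is the same map with codomain corestricted to $T_2(R(T_1)) = R(T_1 T_2)$. Composing with $\gamma^{T_1}(N) = T_1|_N$ in the diagrammatic convention $(T_1 T_2)(x) = T_2(T_1(x))$ used throughout the paper gives $(T_1 T_2)|_N = \gamma^{T_1 T_2}(N)$. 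The main obstacle I anticipate is the bookkeeping at this step: identifying the epimorphic component via the normal factorization in $\mathscr F(H)$ from Section 3 and tracking composition order carefully so that $\gamma^{T_1} \cdot \gamma^{T_2}$ corresponds to $T_1 T_2$ rather than $T_2 T_1$. A secondary subtlety worth flagging is the intended scope of ``finite rank operator'': the construction produces arbitrary linear finite-rank maps into $M$ with no automatic continuity, so the natural target is the semigroup of all (not necessarily bounded) finite-rank linear operators; restricting to the bounded semigroup $\mathcal S$ would require an additional continuity hypothesis on the admissible cones.
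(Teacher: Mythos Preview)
Your proposal is correct and mirrors the paper's proof: both send $T$ to the cone $\gamma^T(M)=T|_M$, verify the homomorphism property via the epimorphic component of $T_2|_{R(T_1)}$, and recover the operator from a cone $\gamma$ through the values $\gamma(\langle x\rangle)(x)$. The only cosmetic difference is that the paper first fixes a Hamel basis $\mathscr B$, sets $T(b)=\gamma(\langle b\rangle)(b)$, extends linearly, and then checks $\gamma(\langle x\rangle)=T|_{\langle x\rangle}$ for arbitrary $x$, whereas you take $T_\gamma(x):=\gamma(\langle x\rangle)(x)$ as the definition and verify linearity directly; your closing remark on unbounded finite-rank maps is exactly the distinction the paper draws immediately afterward when introducing bounded normal cones.
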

	\begin{proof}
		Let $\mathcal S'$ be the semigroup of all finite rank operators on $H$. For $T\in \mathcal S'$, define the map $\gamma^T$ from $\textit{\textbf v} \mathscr F(H)$ to $\mathscr F(H)$ by, $$\gamma^T(M)=T|_M: M\rightarrow R(T),$$ for $M\in \textit{\textbf v} \mathscr F(H)$. If $M_1 \subseteq M_2$, then $J_{M_1}^{M_2}\,\gamma^T(M_2)=J_{M_1}^{M_2}\,(T|_{M_2})=T|_{M_1} = \gamma^T(M_1),$ therefore, $\gamma^T$ is a cone in $\mathscr F(H)$. 		
		For any complement $M$ of $Z(T)$, $\,T|_M$ is an isomorphism from $M$ onto $R(T)$, hence, $\gamma^T$ is a normal cone with vertex $R(T)$. Thus, for any finite rank operator $T$on $H$, there is a normal cone $\gamma^T$ in $\mathscr F(H)$ with vertex $R(T)$.
		Define $\phi:\mathcal S'\rightarrow T\mathscr F(H)$ by $$\phi(T)=\gamma^T.$$ Let $T_1, T_2 \in \mathcal S'$ and $N=R(T_1)$. Then for every $M\in \textit{\textbf v} \mathscr F(H)$,
		\begin{align*}
			\phi(T_1T_2)(M)&=\gamma^{T_1T_2}(M)=(T_1T_2)|_M=(T_1|_M)(T_2|_{N})^\circ\\
			&=\gamma^{T_1}(M)(\gamma^{T_2}(N))^\circ=\gamma^{T_1} \cdot\gamma^{T_2}(M)\\
			&=(\phi(T_1)\cdot\phi(T_2))(M),
		\end{align*}
		that is, $\phi(T_1T_2)= \phi(T_1)\cdot\phi(T_2)$ and so $\phi$ is a homomorphism from $\mathcal S'$ to $T\mathscr F(H)$.
		
		Next, we have to show that for any normal cone $\gamma$ in $\mathscr F(H)$ with vertex $N$, there is a unique finite rank operator $T$ on $H$ such that $R(T)=N$ and $\gamma^T=\gamma$. Let $\gamma$ be a normal cone in $\mathscr F(H)$ with vertex $N$ and $\mathscr B$ be a basis of $H$. Define the map $T$ on $H$ by for $b\in \mathscr B$, 
		$$T(b)=\gamma(\langle b \rangle)(b),$$ 
		where $\langle b \rangle= \text{Span}\{b\}$, and extend $T$ linearly to all of $H$. That is, if $ T_b=\gamma(\langle b \rangle)$ for $b\in \mathscr B$, then $T(b)=T_b(b)$. To prove that $\gamma(M)=T|_M$ for any finite-dimensional subspace $M$ of $H$, it is enough to prove that $\gamma(\langle x \rangle)=T|_{\langle x \rangle}$ for any $x\in H$. Let  $x=c_1b_1+c_2b_2+\cdots +c_nb_n$ for $b_i\in \mathscr B$ and $c_i\in \textbf K$, $i=1, 2, \cdots, n$. Let $U=\text{Span}\{b_1, b_2, \cdots, b_n\}$, and $T_{b_i}=\gamma(\langle b_i \rangle)$ for $i=1, 2, \cdots, n$, then for $i=1, 2, \cdots, n$, since $\langle b_i \rangle\subseteq U$, $T(b_i)=T_{b_i}(b_i)=\gamma(\langle b_i \rangle) (b_i)=\gamma(U)(b_i)$, therefore, $\gamma(U)=T|_U$, and $\gamma(\langle x\rangle)=J_{\langle x \rangle}^U \, \gamma(U)=J_{\langle x \rangle}^U \, T|_U= T|_{\langle x \rangle}$ for any $x\in H$. 
		Since, $\gamma$ is a normal cone, there is a finite-dimensional subspace $M$ of $H$ such that $\gamma(M)=T|_M$ is an isomorphism from $M$ to $N$. Thus, $R(T)=N$ and $\gamma^T=\gamma$. 
		
		Finally to prove the uniqueness of $T$, suppose that $\gamma=\gamma^A$ for some $A\in \mathcal S'$. Then for each $b\in \mathscr B, (T|_{\langle b \rangle})(b)= \gamma^T(\langle b \rangle)(b)=\gamma^A(\langle b \rangle)(b)= (A|_{\langle b \rangle})(b)$. Thus, $T(b)=A(b)$ for each $b\in \mathscr B$. Therefore, $T=A$.
	Hence, $\phi$ is an isomorphism from the semigroup $\mathcal S'$ of all finite rank operators on $H$ to the semigroup $T\mathscr F(H)$ of all normal cones in $\mathscr F(H)$.
	\end{proof}
	Since the semigroup $T\mathscr F(H)$ of all normal cones in $\mathscr F(H)$ is a regular semigroup, we have the following.
	\begin{cor}
		The semigroup of all finite rank operators on $H$ is a regular semigroup.
	\end{cor}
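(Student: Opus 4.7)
The plan is to transfer regularity from $T\mathscr F(H)$ to the semigroup of finite rank operators via the isomorphism constructed in Theorem~\ref{t61}. The argument is essentially a one-line application of that theorem together with a standard fact from the cross-connections theory recalled in the preliminaries, so I do not expect any genuine obstacle; the main task is to state the transfer cleanly.

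First I would recall that for any normal category $\mathscr C$, the set $T\mathscr C$ of all normal cones, equipped with the product $(\gamma\cdot\sigma)(c)=\gamma(c)(\sigma(c_\gamma))^\circ$, is a regular semigroup; this is Nambooripad's result quoted just before Proposition~\ref{p21}. Since $\mathscr F(H)$ is a normal category (as observed in the opening of Section~4), it follows that $T\mathscr F(H)$ is regular.

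Next I would invoke Theorem~\ref{t61}, which furnishes a semigroup isomorphism $\phi\colon \mathcal S' \to T\mathscr F(H)$, where $\mathcal S'$ denotes the semigroup of finite rank operators on $H$ under composition. Regularity is preserved under semigroup isomorphism: given $T\in\mathcal S'$, the element $\phi(T)\in T\mathscr F(H)$ admits a von Neumann inverse $\sigma$ satisfying $\phi(T)\cdot\sigma\cdot\phi(T)=\phi(T)$, and then $T'=\phi^{-1}(\sigma)\in\mathcal S'$ satisfies $TT'T=T$. Hence $\mathcal S'$ is a regular semigroup, completing the proof.
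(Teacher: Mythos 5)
Your argument is correct and is exactly the paper's: the authors likewise deduce the corollary by combining the regularity of $T\mathscr F(H)$ (a normal category always has a regular semigroup of normal cones, as recalled in the preliminaries) with the isomorphism of Theorem~\ref{t61}. Your explicit verification that regularity transfers along a semigroup isomorphism is a minor elaboration the paper leaves implicit.
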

	\begin{exam}
		Let $H=l^2$ and for $n=1, 2, 3, \ldots$, $e_n=(0, \ldots, 0, 1, 0, \ldots),$ where $1$ occurs only in the $nth$ entry. Also let $\mathscr B$ be a basis of $l^2$ containing $e_1, e_2, e_3, \ldots$. Define $T: l^2\rightarrow l^2$ by, 
		\begin{equation*}
			T(x)=
			\begin{cases}
				ne_1 & \text{ if } x=e_n\\
				0 & \text{ if } x\in \mathscr B, x\neq e_n \text{ for any } n.
			\end{cases}
		\end{equation*}
		By extending $T$ linearly to $l^2$, $T$ is a finite rank operator on $l^2$, which is not bounded. Then one can define a normal cone $\gamma^T$ in $\mathscr F(H)$ with vertex $\langle e_1\rangle=Span\{e_1\}$ by for $M\in \textit{\textbf v} \mathscr F(H)$, $$\gamma^T(M)=T|_M:M\rightarrow \langle e_1\rangle.$$
		
		Similarly it is easy to see that one can define normal cone $\rho^T$ in $\mathscr L(\mathcal S)$ with vertex $\mathcal SP_{\langle e_1\rangle}$ by, for $\mathcal SP_M \in \textit{\textbf v} \mathscr L(\mathcal S)$, $$\rho ^T(\mathcal SP_M)=\rho(P_M, P_MT, P_{\langle e_1\rangle}),$$
		and normal cone $\lambda^T$ in $\mathscr R(\mathcal S)$ with vertex $P_{\langle e_1\rangle}\mathcal S$ by,  for $P_M\mathcal S \in \textit{\textbf v} \mathscr R(\mathcal S)$, $$\lambda ^T(P_M\mathcal S)=\lambda(P_M, (P_MT)^*, P_{\langle e_1\rangle}).$$
	\end{exam}
	Next we proceed to descibe bounded normal cone in $\mathscr F(H)$ such that the set of all bounded normal cones in $\mathscr F(H)$ forms a normed algebra isomorphic to the normed algebra $\mathcal S$.
	\begin{defn}
		A normal cone $\gamma$ in $\mathscr F(H)$ with vertex $N$ is said to be a bounded normal cone if there exists some $\alpha_\gamma \in \mathbb R$ such that $ \Arrowvert  \gamma(M)\Arrowvert \leq \alpha_\gamma $, for all $M \in \textit{\textbf v} \mathscr F(H)$.
	\end{defn}
	Analogous to this, we can define bounded normal cones in $\mathscr L(\mathcal S)$ and $\mathscr R(\mathcal S)$.
	The following theorem revels that the set $B\mathscr L(\mathcal S)$ of all bounded normal cones in $\mathscr L(\mathcal S)$ is a normed algebra isomorphic to the normed algebra $\mathcal S$.
	\begin{thm}
		The set $B\mathscr L(\mathcal S)$ of all bounded normal cones in $\mathscr L(\mathcal S)$ is a normed algebra, which is isomorphic to the normed algebra $\mathcal S$.
	\end{thm}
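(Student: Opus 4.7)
The plan is to exhibit an explicit normed-algebra isomorphism $\phi:\mathcal S\to B\mathscr L(\mathcal S)$ via $\phi(T)=\rho^T$, where $\rho^T$ is the principal cone associated to $T$ (taking $f=P_{R(T)}$ in the construction recalled in Section~2), so that
\[
\rho^T(\mathcal SP_M) \;=\; \rho(P_M,\, P_M T,\, P_{R(T)}).
\]
The unlabeled proposition of Section~2 (or, equivalently, Theorem \ref{t61} transported through Theorem \ref{t41}) already yields that $T\mapsto\rho^T$ is a semigroup homomorphism from $\mathcal S$ into $T\mathscr L(\mathcal S)$, so multiplicativity of $\phi$ is free. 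I would first check that $\rho^T$ is in fact \emph{bounded}: since $\|\rho^T(\mathcal SP_M)\|=\|P_M T\|\leq\|T\|$ for every finite-dimensional $M$, the cone $\rho^T$ lies in $B\mathscr L(\mathcal S)$, and taking $M=Z(T)^\perp$ gives $P_M T=T$, hence $\sup_M \|\rho^T(\mathcal SP_M)\|=\|T\|$.

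I would next show $\phi$ is a bijection onto $B\mathscr L(\mathcal S)$. Given an arbitrary $\rho\in B\mathscr L(\mathcal S)$, transport it via the functor $F$ of Theorem \ref{t41} to a bounded normal cone $\gamma$ in $\mathscr F(H)$. By Theorem \ref{t61} there is a unique finite rank operator $T_0$ on $H$ with $\gamma=\gamma^{T_0}$, and boundedness of $\gamma$ forces $T_0$ to be bounded: for any unit vector $x\in H$, $\|T_0 x\|=\|\gamma(\langle x\rangle)(x)\|\leq\alpha_\gamma$, so $T_0\in\mathcal S$. Transporting back through $G$ yields $\rho=\rho^{T_0}$, giving surjectivity; injectivity is the uniqueness clause of Theorem \ref{t61}.

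I would then equip $B\mathscr L(\mathcal S)$ with the natural normed-algebra structure by declaring
\[
\rho^{T_1}+\rho^{T_2}:=\rho^{T_1+T_2}, \qquad k\rho^T:=\rho^{kT}, \qquad \|\rho^T\|:=\sup_M \|\rho^T(\mathcal SP_M)\|,
\]
with multiplication being the semigroup product of cones. By construction $\phi$ then preserves $+$, scalar multiplication, the norm (by the calculation above), and the product (already in hand). Hence $\phi$ is a normed-algebra isomorphism.

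The main technical obstacle I foresee is giving these algebra operations on $B\mathscr L(\mathcal S)$ an \emph{intrinsic} description at the level of cone components, rather than one merely inherited through $\phi$. The subtlety is that $\rho^{T_1}$, $\rho^{T_2}$, and $\rho^{T_1+T_2}$ carry distinct vertices $\mathcal SP_{R(T_1)}$, $\mathcal SP_{R(T_2)}$, $\mathcal SP_{R(T_1+T_2)}$, so componentwise addition is not literally a sum inside a single hom-set. The standard fix is to enlarge all three cones to have common codomain $\mathcal SP_{R(T_1)+R(T_2)}$ by post-composing with the inclusion morphisms from Lemma \ref{l21}(5); after this enlargement the sum of components does agree, component by component, with $\rho^{T_1+T_2}$, and an analogous remark (together with the convention that $\rho^0$ is the cone at vertex $\mathcal SP_{\{0\}}$) legitimises scalar multiplication.
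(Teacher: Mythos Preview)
Your proposal is correct and follows essentially the same route as the paper: define $\phi(T)=\rho^{T}$, check boundedness via $\Arrowvert\rho^{T}(\mathcal SP_M)\Arrowvert=\Arrowvert P_MT\Arrowvert\le\Arrowvert T\Arrowvert$, obtain surjectivity and injectivity from Theorem~\ref{t61} together with the observation that boundedness of the cone forces $\Arrowvert T(x)\Arrowvert\le\alpha\Arrowvert x\Arrowvert$, and then transport the algebra operations through $\phi$. Your additional remarks --- that the supremum $\sup_M\Arrowvert\rho^{T}(\mathcal SP_M)\Arrowvert$ is attained at $M=Z(T)^{\perp}$ and hence equals $\Arrowvert T\Arrowvert$, and the discussion of making the addition of cones intrinsic by enlarging vertices --- go slightly beyond what the paper writes (it simply declares $\Arrowvert\rho^{T}\Arrowvert=\Arrowvert T\Arrowvert$ and defines the linear operations via $\phi$), but they do not change the underlying argument.
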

	\begin{proof}
		For $T\in \mathcal S$ the principal cone $\rho ^T$ is a normal cone in $\mathscr L(\mathcal S)$ with vertex $\mathcal S P_N$, where $N=R(T)$ and for any 
		$\mathcal SP_M \in \textit{\textbf v} \mathscr L(\mathcal S)$, 
		$$\Arrowvert \rho ^T(\mathcal SP_M)\Arrowvert=\Arrowvert \rho(P_M, P_MT, P_N)\Arrowvert= \Arrowvert P_M T\Arrowvert \leq \Arrowvert T \Arrowvert ,$$ therefore, $\rho ^T$ is a bounded normal cone in $\mathscr L(\mathcal S)$. Define the function $\phi: \mathcal S\rightarrow B\mathscr L(\mathcal S)$ by, 
		$$\phi(T)=\rho^T,$$
		clearly  $\phi$ is one-one. To prove that $\phi$ is onto, suppose that $\gamma$ is a bounded normal cone in $\mathscr L(\mathcal S)$ with vertex $\mathcal S P_N$, by Theorem \ref{t61}, there exists a finite rank operator $T$ on $H$ such that $\gamma= \rho ^T$ and $N=R(T)$, ie., for any $\mathcal SP_M \in \textit{\textbf v} \mathscr L(\mathcal S)$, $\gamma(\mathcal SP_M)=\rho ^T(\mathcal SP_M)=\rho(P_M, P_MT, P_N)$. 
		
		\noindent Since $\rho ^T$ is a bounded normal cone, there exists some $\alpha \in \mathbb R$ such that $ \Arrowvert  \rho ^T(\mathcal SP_M)\Arrowvert \leq \alpha $, for all $\mathcal SP_M \in \textit{\textbf v} \mathscr L(\mathcal S)$, so for any $\mathcal SP_M \in \textit{\textbf v} \mathscr L(\mathcal S)$, $\Arrowvert  P_MT\Arrowvert = \Arrowvert \rho(P_M, P_MT, P_N)\Arrowvert= \Arrowvert \rho ^T(\mathcal SP_M)\Arrowvert \leq \alpha $. Hence, $\Arrowvert P_{\langle x \rangle}T\Arrowvert \leq \alpha$ for any $x\in H$. Thus, for any $x\in H$, 
		$$\Arrowvert T(x) \Arrowvert= \Arrowvert (P_{\langle x \rangle}T)(x)\Arrowvert \leq \alpha \Arrowvert x \Arrowvert, $$ 
		therefore, $T$ is a finite rank bounded operator on $H$ and $\gamma=\phi(T)$, and 
		$B\mathscr L(\mathcal S)$ is a normed space. Further for $\rho ^{T_1}, \rho ^{T_2}$ in $B\mathscr L(\mathcal S)$,
		$$\rho^{T_1}+\rho^{T_2}=\rho^{T_1+T_2},$$
		$$k\rho^{T_1}=\rho^{kT_1},$$
		$$ \text{ and } \Arrowvert \rho^{T_1}\Arrowvert= \Arrowvert T_1\Arrowvert.$$
		But,
		$$\phi(T_1T_2)=\rho^{T_1T_2}=\rho^{T_1}\cdot\rho^{T_2}=\phi(T_1)\cdot\phi(T_2),$$
		$$\phi(T_1+T_2)=\rho^{T_1+T_2}=\rho^{T_1}+\rho^{T_2}=\phi(T_1)+\phi(T_2),$$
		$$\phi(kT)=\rho^{kT}=k\rho^{T}=k\phi(T),$$
		$$ \text{ and }\Arrowvert \phi(T)\Arrowvert= \Arrowvert \rho^T\Arrowvert=\Arrowvert T\Arrowvert.$$
		Thus, the set $B\mathscr L(\mathcal S)$ of all bounded normal cones in $\mathscr L(\mathcal S)$ is a normed algebra that is isomorphic to the normed algebra $\mathcal S$.
	\end{proof}

	Consier the normal cone $\gamma'$ in $\mathscr R(\mathcal S)$ with vertex $P_N\mathcal S$ corresponding to the normal cone $\gamma$ in $\mathscr L(\mathcal S)$ with vertex $\mathcal S P_N$ given by for $P_M\mathcal S \in \textit{\textbf v} \mathscr R(\mathcal S)$,
	$$\gamma'(P_M\mathcal S)=F(\gamma(F^{-1}(P_M\mathcal S))),$$ 
	where $F$ is the isomorphism from $\mathscr L(\mathcal S)$ to $\mathscr R(\mathcal S)$ in Theorem \ref{t44}. 
	Since $F$ is an isometry on each hom-set, there is a bijection between the set $B\mathscr R(\mathcal S)$ of all bounded normal cones in $\mathscr R(\mathcal S)$, and the set $B\mathscr L(\mathcal S)$ of all bounded normal cones in $\mathscr L(\mathcal S)$.
	Furthermore, for $T\in \mathcal S$, the normal cone 
	corresponding to the normal cone $\rho^T$ in $\mathscr L(\mathcal S)$ is $\lambda^{T^*}$.
	For $P_M\mathcal S \in \textit{\textbf v} \mathscr R(\mathcal S)$, 
	$$\gamma'(P_M\mathcal S)=F(\rho^T(\mathcal SP_M))=F(\rho(P_M, P_MT, P_N))=\lambda(P_M, (P_MT)^*, P_N)=\lambda^{T^*}(P_M\mathcal S),$$
	hence, for $\lambda ^{T_1}, \lambda ^{T_2}$ in $B\mathscr R(\mathcal S)$. Also 
	$$\lambda^{T_1}+\lambda^{T_2}=\lambda^{T_1+T_2},$$
	$$k\lambda^{T_1}=\lambda^{kT_1},$$
	$$ \text{ and } \Arrowvert \lambda^{T_1}\Arrowvert= \Arrowvert T_1\Arrowvert.$$
	Thus we have following theorem. 
	\begin{thm}
		The set $B\mathscr R(\mathcal S)$ of all bounded normal cones in $\mathscr R(\mathcal S)$ is a normed algebra, and this normed algebra $B\mathscr R(\mathcal S)$ is conjugate linear isomorphic to the normed algebra $\mathcal S$.
	\end{thm}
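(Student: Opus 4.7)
The plan is to transport the normed algebra structure from $B\mathscr L(\mathcal S)$, already identified with $\mathcal S$ in the preceding theorem, to $B\mathscr R(\mathcal S)$ through the conjugate linear isometric normal category isomorphism $F:\mathscr L(\mathcal S)\to\mathscr R(\mathcal S)$ of Theorem \ref{t44}. The candidate map is $\psi:\mathcal S\to B\mathscr R(\mathcal S)$ defined by $\psi(T)=\lambda^{T^*}$, which by the computation $F(\rho^T)=\lambda^{T^*}$ recorded just before the statement coincides with the composition of $T\mapsto\rho^T$ with the bijection $\gamma\mapsto\gamma'$ between $B\mathscr L(\mathcal S)$ and $B\mathscr R(\mathcal S)$ induced by $F$.

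First I would justify that $B\mathscr R(\mathcal S)$ is closed under the relevant operations. Because $F$ restricts to an isometry on each hom-set, it carries bounded cones to bounded cones; thus the bijection $B\mathscr L(\mathcal S)\leftrightarrow B\mathscr R(\mathcal S)$ is well defined, and sums, scalar multiples, and cone products computed in $\mathscr R(\mathcal S)$ inherit boundedness from the previous theorem. Next I would establish bijectivity of $\psi$: injectivity follows from injectivity of the adjoint $T\mapsto T^*$ on $\mathcal S$ together with injectivity of $S\mapsto\lambda^S$ (from Theorem \ref{t61} applied to $\mathscr R(\mathcal S)\cong\mathscr F(H')$); surjectivity uses the previous theorem in that any $\lambda\in B\mathscr R(\mathcal S)$ pulls back under $F^{-1}$ to a bounded normal cone $\rho^S\in B\mathscr L(\mathcal S)$, and then $\lambda=F(\rho^S)=\lambda^{S^*}=\psi(S)$.

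Finally I would verify the algebraic identities. The conjugate linearity $\psi(T_1+T_2)=\psi(T_1)+\psi(T_2)$ and $\psi(kT)=\bar k\,\psi(T)$, and the isometry $\|\psi(T)\|=\|T\|$, follow immediately by substituting into the identities $\lambda^{T_1}+\lambda^{T_2}=\lambda^{T_1+T_2}$, $k\lambda^{T}=\lambda^{kT}$, $\|\lambda^{T}\|=\|T\|$ already displayed in the text, combined with $(T_1+T_2)^*=T_1^*+T_2^*$, $(kT)^*=\bar k\,T^*$, and $\|T^*\|=\|T\|$. The multiplicativity $\psi(T_1T_2)=\psi(T_1)\cdot\psi(T_2)$ is the most delicate point and is the step I expect to be the main obstacle, because it requires the order reversal of the adjoint to match the order reversal coming from $\mathscr R(\mathcal S)=\mathscr L(\mathcal S^{op})$; the cleanest route is to invoke Proposition \ref{p21} for the normal category isomorphism $F$, which upgrades $\rho^S\mapsto\lambda^{S^*}$ to a semigroup homomorphism, yielding
\[
\psi(T_1)\cdot\psi(T_2)=\lambda^{T_1^*}\cdot\lambda^{T_2^*}=F(\rho^{T_1}\cdot\rho^{T_2})=F(\rho^{T_1T_2})=\lambda^{(T_1T_2)^*}=\psi(T_1T_2).
\]
Combining these verifications shows that $\psi$ is a bijective conjugate linear isometric algebra homomorphism, which is precisely the assertion of the theorem.
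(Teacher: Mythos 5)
Your proposal is correct and follows essentially the same route as the paper: both define the map $T\mapsto\lambda^{T^*}$ (the composite of $T\mapsto\rho^T$ with the isomorphism $F$ of Theorem \ref{t44}) and verify conjugate linearity, the isometry $\Arrowvert\lambda^{T^*}\Arrowvert=\Arrowvert T\Arrowvert$, and multiplicativity. Your treatment of the multiplicative step via Proposition \ref{p21} is a slightly more explicit justification of the identity $\lambda^{T_2^*T_1^*}=\lambda^{T_1^*}\cdot\lambda^{T_2^*}$ that the paper asserts directly, but it is the same underlying argument.
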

	\begin{proof}
		Define the map $\phi: \mathcal S \rightarrow B\mathscr R(\mathcal S)$ by,
		$\phi(T)=\lambda^{T^*}$. Then,
		$$\phi(T_1T_2)=\lambda^{(T_1T_2)^*}=\lambda^{T_2^*T_1^*}=\lambda^{T_1^*}\cdot\lambda^{T_2^*}=\phi(T_1)\cdot\phi(T_2),$$
		$$\phi(T_1+T_2)=\lambda^{(T_1+T_2)^*}=\lambda^{(T_1)^*+(T_2)^*}=\lambda^{T_1^*}+\lambda^{T_2^*}=\phi(T_1)+\phi(T_2),$$
		$$\phi(kT)=\lambda^{(kT)^*}=\lambda^{\bar kT^*}=\bar k\lambda^{T^*}=\bar k\phi(T),$$
		$$\text {and }\Arrowvert \phi(T)\Arrowvert= \Arrowvert \lambda^{T^*}\Arrowvert=\Arrowvert T^*\Arrowvert=\Arrowvert T\Arrowvert,$$
		therefore, $B\mathscr R(\mathcal S)$ is a normed algebra, and it is conjugate linear isomorphic to the normed algebra $\mathcal S$.
	\end{proof}
	
\end{document}